\newtheorem{theorem}{Theorem}
\theoremstyle{definition}
\newtheorem{example}[theorem]{Example}
\newtheorem{definition}[theorem]{Definition}
\newcommand{\eqdef}{\overset{\mbox{\tiny def}}{=}}
\newcommand{\R}{\mathbb{R}}
\newcommand{\Z}{\mathbb{Z}}
\newcommand{\V}{\mathcal V}
\def\S{\mathcal S}
\def\C{\mathcal C}
\newcommand{\Reac}{\mathcal R}
\newcommand{\po}{\text{Po}}
\title{Lyapunov functions, stationary distributions, and non-equilibrium potential  for  reaction networks}
\author{David F. Anderson\footnote{Department of Mathematics, University of Wisconsin-Madison; {\tt anderson@math.wisc.edu}}, \ \ Gheorghe Craciun\footnote{Department of Mathematics and Department of Biomolecular Chemistry, University of Wisconsin-Madison; {\tt craciun@math.wisc.edu}}, \ \ Manoj Gopalkrishnan\footnote{School of Technology and Computer Science, Tata Institute of Fundamental Research, Mumbai, India; {\tt manojg@tifr.res.in}}, \ \ Carsten Wiuf\footnote{Department of Mathematical Sciences, University of Copenhagen; {\tt wiuf@math.ku.dk}}}
\begin{document}

\maketitle

\abstract{
	We consider the relationship between stationary distributions for stochastic models of  reaction systems and Lyapunov functions for their deterministic counterparts.  Specifically, we derive the well known Lyapunov function of reaction network theory as a scaling limit of the non-equilibrium potential  of the stationary distribution of stochastically modeled complex balanced systems. We extend this result to general birth-death models and demonstrate via example that similar scaling limits can yield Lyapunov functions even for models that are not complex or detailed balanced, and may even have multiple equilibria.
}

\section{Introduction}

Reaction network models are ubiquitous in the study of various types of population dynamics in biology.
For example, they are used in modeling subcellular processes in molecular biology \cite{AK2015,Elowitz2002,Gunawardena2005,ShF2010}, signaling systems \cite{Sontag2001,TCN2003}, metabolism \cite{DRN2013}, as well as the spread of infectious diseases \cite{Allen2003} and interactions between species in an ecosystem \cite{May2001,SmithTh2011}.    Depending upon the relevant scales of the system, either a deterministic or stochastic model of the dynamics is utilized.

This paper studies the connection between deterministic and stochastic models of reaction systems.  In particular, for the class of so-called ``complex balanced'' models, we make a connection between the stationary distribution of the stochastic model and the classical Lyapunov function used in the study of the corresponding deterministic models.  Specifically, we show that in the  large volume limit of Kurtz \cite{Kurtz72,Kurtz78}, the non-equilibrium potential  of the stationary distribution of the scaled stochastic model converges to the standard Lyapunov function of deterministic  reaction network theory.   Further, we extend this result to birth-death processes.

In 1972, Horn and Jackson~\cite{HornJack72} introduced a Lyapunov function for the study of complex balanced systems, and remarked on a formal similarity to Helmholtz free energy functions. 
Since then the probabilistic interpretation of this Lyapunov function for complex balanced systems has remained obscure.
For detailed balanced systems, which form a subclass of complex balanced systems, a probabilistic  interpretation for the Lyapunov function is  known --- see, for example, the work of Peter Whittle~\cite[Section~5.8]{Whittle86} --- though  these arguments appear to be little known in the mathematical biology  community. The key ingredient that enables us to extend the analysis pertaining to detailed balanced systems to complex balanced systems comes from~\cite{ACK2010}, where Anderson, Craciun, and Kurtz showed that the stationary distribution for the class of  complex balanced  reaction networks can be represented as a product of Poisson random variables; see equation \eqref{eq:prod_form_dist} below.

While there are myriad results pertaining to either stochastic or deterministic models, there are relatively few making a connection between the two.  Perhaps the best known such connections come from the seminal work of Thomas Kurtz \cite{Kurtz72,Kurtz78,Kurtz80}, which details the limiting behavior of classically scaled stochastic models on \textit{finite} time intervals, and demonstrates the validity of the usual deterministic ODE models on those intervals. There is even less work on the connection between the deterministic and stochastic models on infinite time horizons, that is, on the long term behavior of the different models, though two exceptions stand out.  As alluded to above, Anderson, Craciun, and Kurtz showed that a stochastically modeled complex balanced system --- for which the deterministically modeled system has  complex balanced equilibrium\footnote{By \textit{equilibrium} we mean a fixed point of a dynamical system.  In particular, what is referred to in the biochemistry literature as a ``non-equilibrium steady state'' is also included in our use of the term equilibrium.}  $c$ --- has a stationary distribution of product form,
\begin{equation}\label{eq:prod_form_dist}
	\pi(x) = \frac{1}{Z_\Gamma} \prod_{i=1}^d \frac{ c_i^{x_i}}{x_i!}, \quad x \in \Gamma \subset \Z^d_{\ge 0},
\end{equation}
where $\Gamma$ is the state space of the stochastic model and $Z_\Gamma>0$ is a normalizing constant \cite{ACK2010}.  On the other hand, in \cite{AndACRStoch2014}, Anderson, Enciso, and Johnston provided a large class of networks for which the limiting behaviors of the stochastic and deterministic models are fundamentally different, in that the deterministic model has special ``absolutely robust'' equilibria whereas the stochastic model necessarily undergoes an extinction event.  

In the present paper, we return to the context of complex balanced models studied in \cite{ACK2010}, and show that the usual Lyapunov function of Chemical Reaction Network Theory (CRNT),
\begin{equation}\label{eq:standardLyapunov}
	\V(x) = \sum_i x_i\left( \ln(x_i) - \ln( c_i) -1\right)  +  c_i,
\end{equation}
can be understood as the limit of the non-equilibrium potential  of the distribution \eqref{eq:prod_form_dist} in the classical scaling of Kurtz. We extend this result to the class of birth-death models.  We then demonstrate through examples that Lyapunov functions for an even wider class of models can be constructed through a similar scaling of stationary distributions. It is not yet clear just how wide the class of models for which this specific scaling limit provides a Lyapunov function is, and we leave this question open.  Similar (non-mathematically rigorous) results have been pointed out in the physics literature though the generality of these results remain unclear \cite{Qian11}.  
See also \cite{GK2013} for recent mathematical work pertaining to the ergodicity of stochastically modeled reaction systems and \cite{PCK2014} for earlier related work pertaining to the irreducibility and recurrence properties of stochastic models.

Before proceeding, we provide a key definition.
\begin{definition}
	 Let $\pi$ be a probability distribution on a countable set $\Gamma$ such that $\pi(x)>0$ for all $x \in \Gamma$.  The \textit{non-equilibrium potential} of the distribution $\pi$ is the function $\phi_\pi\colon \Gamma \to \R$ defined by
	\[
		\phi_\pi(x) =- \ln\left(\pi(x)\right).
	\]
\end{definition}

We close the introduction with  an illustrative example.

\begin{example}\label{ex:30984}
	Consider the catalytic activation-inactivation network
	\begin{align}\label{ex:first}	
		2A \rightleftharpoons A+B, 
	\end{align}	
	where $A$ and $B$ represent the active and inactive forms of a protein, respectively.  The usual {\em deterministic} mass-action kinetics model for the concentrations $(x_A,x_B)$ of the species $A$ and $B$ is
	\begin{align*}
		\dot x_A &= -\kappa_1 x_A^2 + \kappa_2 x_Ax_B\\
		\dot x_B &=\phantom{-} \kappa_1 x_A^2 - \kappa_2 x_Ax_B,
	\end{align*}
	where $\kappa_1$ and $\kappa_2$ are the corresponding reaction rate constants for the forward and reverse reactions in \eqref{ex:first}.   For a given total concentration $M \eqdef x_A(0) + x_B(0) >0$, these equations have a unique stable  equilibrium 
	\begin{align}\label{eq:eq-example}
			 c_A = \frac{M\kappa_2}{\kappa_1+\kappa_2},\qquad
			 c_B =\frac{M\kappa_1}{\kappa_1+\kappa_2},
	\end{align}
	which can be shown to be complex balanced.
	
	We now turn to a {\em stochastic} model for the network depicted in \eqref{ex:first}, that tracks the molecular counts for species $A$ and $B$. Letting $V$ be a scaling parameter, which can be thought of as Avogadro's number multiplied by volume, see section \ref{sec:scaling}, the standard stochastic mass-action kinetics model can be described in several different ways.  
	For example, the Kolmogorov forward equations governing the probability distribution of the process are
	\begin{align}
	\label{eq:forward_first}
	\begin{split}
		\frac{d}{dt} p_{\mu}(x_A,x_B,t) &= \frac{\kappa_1}{V} (x_A+1)x_Ap_\mu(x_A+1,x_B-1,t) \\
		&\hspace{.2in} + \frac{\kappa_2}{V} (x_A-1)(x_B+1)p_\mu(x_A-1,x_B+1,t)\\
		&\hspace{.2in} - \left[\frac{\kappa_1}{V} x_A(x_A-1) + \frac{\kappa_2}{V} x_Ax_B \right] p_\mu(x_A,x_B,t),
		\end{split}
	\end{align}
	where $x_A,x_B \in \Z_{\ge 0}$ are the molecular counts of $A$ and $B$, respectively, and $p_\mu(x_A,x_B,t)$ denotes the probability that the system is in state $(x_A,x_B)$ at time $t$ given an initial distribution of $\mu$.  Note that there is one such differential equation for each state, $(x_A,x_B)$,  in the state space.  In the biological context the forward equation is typically referred to as the \emph{chemical master equation}.   
	 
	 Assume that the initial distribution for the stochastic model has support on the set $\Gamma^V \eqdef \{(x_A,x_B) \in \Z^2_{\ge 0} | x_A\ge 1, x_A + x_B = VM\}$, where $M>0$ is fixed and $V$ is selected so that $VM$ is an integer.  Hence, the total number of molecules is taken to scale in $V$. The stationary distribution  can then be found by setting the left hand side of the forward equation \eqref{eq:forward_first} to zero and solving the resulting  system of equations (one equation for each $(x_A,x_B) \in \Gamma^V$).  Finding such a solution is typically a challenging, or even impossible task.  However, results in \cite{ACK2010} imply that for this particular system the stationary distribution is (almost) a binomial distribution and is of the form \eqref{eq:prod_form_dist},
  	 \begin{equation}\label{eq:948787}
	  	\pi^V\!(x_A,x_B) = \frac{1}{Z^V} \binom{VM }{x_A} \left( \frac{\kappa_2}{\kappa_1+\kappa_2}\right)^{\!x_A} \left(\frac{\kappa_1}{\kappa_1+\kappa_2}\right)^{\!x_B}, \quad (x_A,x_B) \in \Gamma^V,
	\end{equation}
where $Z^V$ is the normalizing constant
\[
	Z^V\eqdef  1-\left(\frac{\kappa_1}{\kappa_1+\kappa_2}\right)^{\!\!VM}.
\]
 The distribution is not binomial since the state $(x_A,x_B)=(0,VM)$ cannot be realized in the system.

In order to make a connection between the stochastic and deterministic models, we  convert the stochastic model to concentrations by dividing by $V$.  
 That is, for $x\in \Z$ we let $\tilde x^V\eqdef V^{-1}x$.  Letting $\tilde \pi^V\!(\tilde x^V)$ denote the stationary distribution of the scaled process, we find that 
 \[	\tilde \pi^V\!(\tilde x^V )  = \frac{1}{Z^V} \binom{VM}{V \tilde x^V_A} \left( \frac{\kappa_2}{\kappa_1+\kappa_2}\right)^{\!V\tilde x^V_A} \left(\frac{\kappa_1}{\kappa_1+\kappa_2}\right)^{\!V\tilde x^V_B},\]
where $\tilde x^V \in \frac{1}{V} \Gamma^V$. We now consider the non-equilibrium potential  of $\tilde \pi^V$ scaled by $V$
 \begin{align*}
	-\frac{1}{V}\ln (\tilde \pi^V\!(\tilde x^V)) &=\frac{1}{V}\ln( Z^V)- \frac{1}{V}\ln( (VM)!) + \frac{1}{V}\ln((V\tilde x_A^V)!) + \frac{1}{V}\ln((V\tilde x^V_B)!)\\
	&\hspace{.2in} - \tilde x^V_A \ln\left( \frac{\kappa_2}{\kappa_1+\kappa_2}\right) - \tilde x^V_B\ln\left( \frac{\kappa_1}{\kappa_1+\kappa_2}\right).
 \end{align*}
 Stirling's formula says that 
	 \begin{equation}\label{eq:249850}
	 	\ln(n!) = n\ln(n) - n + O(\ln(n))\quad \text{for} \quad n>0.
	\end{equation}  
Assuming that $\lim_{V \to \infty} \tilde x^V= \tilde x \in \R^2_{> 0}$, and after some calculations, equation \eqref{eq:249850} yields	
	 \begin{align*}
		\lim_{V\to \infty} -\frac{1}{V}\ln (\tilde \pi^V\!(\tilde x^V)) &= \tilde x_A\left( \ln \tilde x_A - \ln \left( \frac{\kappa_2}{\kappa_1 + \kappa_2}\right)  \right) \\
		&\hspace{.2in} + \tilde x_B\left( \ln(\tilde x_B) - \ln\left( \frac{\kappa_1}{\kappa_1 + \kappa_2}\right)  \right) - M\ln(M)\\
		&\eqdef \V(\tilde x).
 \end{align*}
  Recalling that $\tilde x_B = M - \tilde x_A$, we may rewrite $\V$ in the following useful way
	 \begin{align*}
	 	\V(\tilde x) &= \tilde x_A \left( \ln \tilde x_A - \ln \left( \frac{M \kappa_2}{\kappa_1 + \kappa_2}\right)-1  \right)  -\frac{M\kappa_2}{\kappa_1 + \kappa_2}\\
		&\hspace{.2in}+\tilde x_B \left( \ln\tilde  x_B - \ln \left( \frac{M\kappa_1}{\kappa_1 + \kappa_2}\right)-1  \right)  -\frac{M\kappa_1}{\kappa_1 + \kappa_2}.
	 \end{align*}
Remarkably, this $\V(\tilde x)$ is exactly the function we would obtain if we were to write the standard Lyapunov function of CRNT, given in  \eqref{eq:standardLyapunov}, for this model. \hfill $\square$
\end{example}

\vspace{.2in}

The first  goal of this paper is to show that the equality between the \textit{scaling  limit} calculated for the stochastic model above, and the Lyapunov function for the corresponding deterministic model is not an accident, but in fact holds for all complex balanced systems.  We will also demonstrate that  the  correspondence holds for a wider class of models.  

The remainder of this article is organized as follows.   In Section \ref{sec:background}, we briefly review some relevant terminology and results.  In Section \ref{sec:scalings}, we derive the general Lyapunov function of Chemical Reaction Network Theory for complex balanced systems as a scaling limit of the non-equilibrium potential  of the corresponding scaled stochastic model.    In Section \ref{sec:example_BD}, we discuss other, non-complex balanced,  models for which the same scaling limit  gives a Lyapunov function for the deterministic model. In particular, we characterize this function when the corresponding stochastic system is equivalent to a stochastic birth-death process.

\section{Reaction systems and previous results}
\label{sec:background}

\subsection{Reaction networks}

We consider a system consisting of  $d$ species, $\{S_1,\dots,S_d\}$,
undergoing transitions due to a  finite number, $m$, of  reactions.  For the $k$th reaction, we denote by $\nu_k, \nu_k' \in \Z^d_{\ge 0}$ the vectors
representing  the number of molecules of each species consumed and
created in one instance of the reaction, respectively.  For example, for the
reaction $S_1 + S_2 \to S_3$, we have $\nu_k = (1,1,0)^T$ and $\nu_k' = (0,0,1)^T$, if there are $d=3$ species in the system. Each $\nu_k$ and $\nu_k'$ is
termed a {\em complex} of the system.  The reaction is  denoted by  $\nu_k \to \nu_k'$, where $\nu_k$ is termed the \textit{source complex} and $\nu_k'$ is the \textit{product complex}.  
A complex may appear as both a source complex and a product complex in
the system. 

\begin{definition}  \label{def:crn}
  Let $\S = \{S_1,\ldots,S_d\}$, $\C = \bigcup_{k=1}^m \{\nu_k,\nu_k'\}$,
  and $\Reac = \{\nu_1 \to  \nu_1', \dots, \nu_{m} \to \nu_{m}'\}$ denote the sets of species, complexes, and reactions,
  respectively.  The triple $\{\S, \C, \Reac\}$ is  a \textit{reaction network}.
\end{definition}

\begin{definition}\label{def:stoich}
  The linear subspace 
 $S = \hbox{span}\{\nu_1' - \nu_1,\dots, \nu_{m}'-\nu_m\}$
  is called the {\em stoichiometric subspace} of the network. For $c \in
  \R^d_{\ge0}$ we say $c + S = \{x\in \R^d | x = c + s \text{ for some } s \in S\}$ is a {\em stoichiometric compatibility class}, $(c+S) \cap \R^d_{\ge0}$ is a {\em non-negative stoichiometric compatibility class}, and $(c+S) \cap \R^d_{>0}$ is a {\em positive stoichiometric compatibility class}. 
\end{definition}

\subsection{Dynamical system models}
\label{sec:dyn}

\subsubsection{Stochastic models} 

The most common stochastic model for a  reaction network   $\{\S, \C, \Reac\}$
treats the system as a continuous time
Markov chain whose state $X$ is a vector giving the number of
molecules of each species present with each reaction modeled as a
possible transition for the chain. The model for the $k$th reaction is determined by the source and product complexes of the reaction,  and a function  $\lambda_k$ of the
state that gives the \textit{transition intensity}, or rate,
at which the reaction occurs. In the biological and chemical literature,
transition intensities are referred to as \textit{propensities}.

Specifically, if the $k$th reaction occurs at time $t$ the state is  updated
by addition of the \textit{reaction vector} $\zeta_k \eqdef \nu_k' - \nu_k$ and
\[X(t) = X(t-) + \zeta_k.\]
The most common choice for intensity functions is to assume the system satisfies the stochastic version of \textit{mass-action kinetics}, which states that the rate functions take the form
\begin{equation}\label{eq:stoch_MA}
	\lambda_k(x) = \kappa_k \prod_{i=1}^d \frac{x_{i}!}{(x_{i}  - \nu_{ki})!}1_{\{x_i \ge \nu_{ki}\}},
\end{equation}
for some constant $\kappa_k>0$,  termed the rate constant, and where $\nu_k=(\nu_{k1},\ldots,\nu_{kd})^T$.
Under the assumption of mass-action kinetics and a non-negative initial condition, it follows that the dynamics of the system is confined to a particular non-negative  stoichiometric compatibility class given by the initial value $X(0)$, namely $X(t)\in (X(0) + S) \cap \R^d_{\ge0}$.

The number of times that the $k$th reaction occurs by time $t$ can be
represented by the counting process
\[R_k(t) = Y_k\left(\int_0^t \lambda_k(X(s))ds\right), \]
 where the $\{Y_k, k \in \{1,\dots,m\}\}$ are independent
unit-rate Poisson processes (see \cite{AndKurtz2011, AK2015, KurtzPop81}, or  \cite[Chapter~6]{Kurtz86}]).  The state of the system then satisfies the  equation $X(t) = X(0) + \sum_k R_k(t)\zeta_k$, or
\begin{align}
  X(t) &=X(0) + \sum_k Y_k\left( \int_0^t \lambda_k(X(s))ds
  \right)\zeta_k
   \label{eq:Rk2},
\end{align}
where the sum is over the reaction channels.
Kolmogorov's forward equation  for this model is
\begin{equation}\label{eq:CME}
  \frac{d}{dt} P_\mu(x, t) = \sum_k \lambda_k(x-\zeta_k) P_\mu(x-
  \zeta_k,t) - \sum_k \lambda_k(x) P_\mu(x,t), 
\end{equation}
where $P_\mu(x,t)$ represents the probability that $X(t) = x\in \Z^d_{\ge 0}$ given an initial distribution of $\mu$ and $\lambda_k(x-\zeta_k) = 0$ if $x-\zeta_k \notin \Z^d_{\ge 0}$.  
So long as the process is non-explosive, the two representations for the processes, the stochastic equation \eqref{eq:Rk2} and the Markov process with forward equation \eqref{eq:CME}, are equivalent \cite{AndKurtz2011,Kurtz86}.

It is of interest to characterize the long-term behavior of the process. Let $\Gamma\subset \Z^d_{\ge 0}$ be a closed component of the state space; that is, $\Gamma$ is closed under the transitions of the Markov chain.
A probability distribution $\pi(x)$, $x\in  \Gamma$, is a stationary distribution for the chain on $\Gamma$ if
\begin{equation}
  \sum_{k} \pi(x - \zeta_k)\lambda_k(x - \zeta_k) =  \pi(x)\sum_k \lambda_k(x)
  \label{eq:stationary2}
\end{equation}
for all $x\in\Gamma$.  (If $x-\zeta_k\not\in\Gamma$ then $\pi(x-\zeta_k)$ is put to zero.) If in addition $\Gamma$ is irreducible, that is,  any state in $\Gamma$ can be reached from any other state in $\Gamma$ (for example, $\Gamma^V$ in Example \ref{ex:30984} is an irreducible component)
 and $\pi$ exists, then $\pi$ is unique \cite{karlin}.

Solving equation \eqref{eq:stationary2} is in general a difficult
task, even when we assume each $\lambda_k$ is determined by mass-action kinetics.
However,  if in addition there  exists a  complex balanced equilibrium for the associated deterministic model, then  equation \eqref{eq:stationary2} can be solved   explicitly, see Theorem \ref{thm:prodform_main} below.

\subsubsection{Deterministic models and complex balanced equilibria}
\label{sec:def0}

For two vectors $u,v \in \R^d_{\ge 0}$ we define $u^v \eqdef \prod_i u_i^{v_i}$ and adopt the convention that $0^0=1$. 

Under an appropriate scaling limit (see Section \ref{sec:scaling})
the continuous time Markov chain model described in the previous section becomes
\begin{equation}
  x(t) = x(0) + \sum_{k} \left(\int_0^t f_k(x(s)) ds \right) (\nu_k' -
  \nu_k), 
  \label{eq:cont}
\end{equation}
where
\begin{equation}
  f_{k}(x) = \ \kappa_{k}x_1^{\nu_{k1}} x_2^{\nu_{k2}} \cdots x_d^{\nu_{kd}} =\kappa_k x^{\nu_k},
  \label{eq:massaction}
\end{equation}
and  $\kappa_k>0$ is a constant.  We say that the deterministic system \eqref{eq:cont} has {\em deterministic mass-action kinetics} if the rate functions $f_k$ have the form \eqref{eq:massaction}.  The system \eqref{eq:cont} is equivalent to the system of {\em ordinary differential equations} (ODEs) with a given initial condition $x_0=x(0)$,

\begin{equation}\label{eq:mass-action1}
   \dot{x} = \sum_k \kappa_k  x^{\nu_{k}}(\nu_k' - \nu_k).
\end{equation}
The trajectory with initial condition $x_0$ is confined to the non-negative stoichiometric compatibility class $(x_0+S)\cap \R^d_{\ge 0}$.

  Some mass-action systems have  complex balanced equilibria \cite{Horn72,HornJack72},\footnote{For example, it is known that all weakly reversible networks with a deficiency of zero give rise to systems that have complex balanced equilibria \cite{FeinbergLec79,Feinberg95a}.}  which have been shown to play an important role in many biological mechanisms \cite{CLWBNE2012,Gille2009,kang2012new,Sontag2001}.    An equilibrium point $c\in\R^d_{\ge 0}$ is said to be complex balanced if and only if for each complex $z \in \C$ we have
\begin{equation}
  \sum_{\{k : \nu_k' = z \}} \kappa_k c^{\nu_k} = \sum_{ \{ k:\nu_k=z \} } \kappa_k c^{\nu_k}, 
 \label{eq:complex_balanced}
\end{equation}
where the sum on the left is over reactions for which $z$ is the
product complex and the sum on the right is over reactions for which
$z$ is the source complex. For such an equilibrium the total inflows and the total outflows balance out at each complex   \cite{FeinbergLec79,Gun2003}.

In \cite{HornJack72} it is shown that if there exists a complex balanced 
  equilibrium $c \in \R^d_{> 0}$ for a given model then
  \begin{itemize}
  \item[(1)]   There is one, and only one, positive equilibrium point in each  positive stoichiometric compatibility class.
    \item[(2)] Each such equilibrium point is complex balanced.
  \item[(3)] Each such complex balanced equilibrium point  is locally asymptotically stable relative to  its stoichiometric compatibility class.
    \end{itemize}
    
     Whether or not each complex balanced equilibrium is globally asymptotically stable relative to its positive stoichiometric compatibility class is the content of the Global Attractor Conjecture, which has received considerable attention \cite{AndGAC_ONE2011,AndGlobal,AndShiu,CNP2013,GMS2014,Pantea2012}.  The local asymptotic stability is concluded by an application of the Lyapunov function \eqref{eq:standardLyapunov}.

\subsubsection{Lyapunov functions}

\begin{definition}\label{def:lya}
Let $E \subset \R^d_{\ge 0}$ be an open subset of $\R^d_{\ge 0}$  and let $f: \R^d_{\ge 0} \to \R$.  A  function $\V\colon E \to \R$  is called a (strict) \emph{Lyapunov function} for the system $\dot{x}=f(x)$ at $x_0\in E$ if $x_0$ is an equilibrium point for $f$,  that is, $f(x_0)=0$, and 
\begin{itemize}
\item[(1)] $\V(x)>0$ for all $x\not= x_0$, $x\in E$ and $V(x_0)=0$
\item[(2)] $\nabla{\V}(x)\cdot f(x)\le 0$, for all $x\in E$, with equality if and only if $x=x_0$, where $\nabla{\V}$ denotes the gradient of $\V$.
\end{itemize}
\end{definition}

If these two conditions are fulfilled then the equilibrium point $x_0$ is  {\em asymptotically stable}  \cite{perko}. If the inequality in (2) is not strict for $x_0\not=x$ then $x_0$ is {\em stable} and not necessarily asymptotically stable. If the inequality is reversed, $\dot{\V}(x)>0$, $x\not=x_0$, then the equilibrium point is {\em unstable} \cite{perko}.

We will see that in many cases the large volume limit of the non-equilibrium potential of a stochastically modeled system is a Lyapunov function defined on the interior of the nonnegative stoichiometric subspace.

\subsection{Product form stationary  distributions}
\label{sec:prod_form}

The following result from \cite{ACK2010},  utilized in \eqref{eq:948787}, provides a characterization of the stationary distributions of complex balanced systems.  See also \cite{Othmer2005,HQ2006} for related work.

\begin{theorem}  \label{thm:prodform_main}
  Let $\{\S, \C, \Reac\}$ be a reaction network and let  $\{\kappa_k\}$ be a choice of rate constants.  Suppose that, modeled deterministically,  the system is complex balanced with a complex  balanced equilibrium $c \in \R^d_{>0}$.  Then the stochastically  modeled system with intensities \eqref{eq:stoch_MA} has a stationary distribution on $\Z^d_{\ge 0}$ consisting of the product of Poisson distributions,
    \begin{equation}
    \pi(x) = \prod_{i=1}^d \frac{c_i^{x_i}}{x_i!}e^{-c_i}, \qquad \text{ for } x \in
    \Z^d_{\ge 0}.
    \label{eq:stationary_product}
  \end{equation}
  If $\Z^d_{\ge 0}$ is irreducible, then \eqref{eq:stationary_product}
  is the unique stationary distribution.   
   If $\Z^d_{\ge 0}$ is not irreducible, then the stationary distribution, $\pi_\Gamma$, of an irreducible component of the state space $\Gamma\subset \Z^d_{\ge 0}$ is 
  \begin{equation*}
    \pi_{\Gamma}(x) = \frac{1}{Z_{\Gamma}} \prod_{i = 1}^d
    \frac{c_i^{x_i}}{x_i!}e^{-c_i}, \qquad \text{ for }  x \in \Gamma,
  \end{equation*}
  and $\pi_{\Gamma}(x) = 0$ otherwise, where $Z_{\Gamma}$ is a
  positive normalizing constant. 
\end{theorem}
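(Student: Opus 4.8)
**The plan is to verify that the product-form distribution (\ref{eq:stationary_product}) satisfies the stationary equation (\ref{eq:stationary2}).** The heart of the matter is a direct substitution: I would plug $\pi(x) = \prod_{i=1}^d c_i^{x_i} e^{-c_i}/x_i!$ into the balance equation
\[
  \sum_{k} \pi(x - \zeta_k)\lambda_k(x - \zeta_k) =  \pi(x)\sum_k \lambda_k(x),
\]
and show equality holds for every $x \in \Z^d_{\ge 0}$. The key computational observation is that for the mass-action intensity (\ref{eq:stoch_MA}), the product $\pi(x)\lambda_k(x)$ simplifies dramatically. Writing $\lambda_k(x) = \kappa_k \prod_i x_i!/(x_i-\nu_{ki})!$ on the set where $x \ge \nu_k$, the factorials in $\lambda_k$ cancel against those in $\pi(x)$, leaving $\pi(x)\lambda_k(x) = \kappa_k\, c^{\nu_k}\, \pi(x-\nu_k)$ (up to the $e^{-c_i}$ and remaining $c^{x-\nu_k}$ factors bookkeeping). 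This is the crucial algebraic identity that converts the analytic-looking balance equation into the purely combinatorial complex-balancing condition (\ref{eq:complex_balanced}).

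After this simplification, I would group the terms in the stationary equation by complexes rather than by reactions. The strategy, following the classical Horn--Jackson idea and the treatment in \cite{ACK2010}, is to prove the stronger ``complex-balanced'' form of stationarity: namely, to show that for \emph{each} complex $z \in \C$,
\[
  \pi(x)\!\!\sum_{\{k:\nu_k=z\}}\!\! \kappa_k c^{\nu_k}
  = \!\!\sum_{\{k:\nu_k'=z\}}\!\! \pi(x-\zeta_k)\lambda_k(x-\zeta_k),
\]
and then sum over all $z$ to recover (\ref{eq:stationary2}). Establishing this per-complex identity reduces, after the factorial cancellation above, exactly to the deterministic complex-balancing equation (\ref{eq:complex_balanced}) satisfied by $c$. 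The verification requires carefully matching the shift $x - \zeta_k = x - \nu_k' + \nu_k$ against the relevant complex and confirming that the indicator constraints $1_{\{x_i \ge \nu_{ki}\}}$ are respected, so that no spurious boundary terms appear when $x$ lies near $\partial \Z^d_{\ge 0}$.

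For the uniqueness and restriction-to-$\Gamma$ claims, I would invoke the standard Markov chain theory cited in the excerpt \cite{karlin}: since $\pi$ is a stationary distribution and $\Z^d_{\ge 0}$ (resp.\ $\Gamma$) is irreducible, the stationary distribution is unique up to normalization. Restricting $\pi$ to an irreducible component $\Gamma$ and renormalizing by $Z_\Gamma = \sum_{x\in\Gamma}\prod_i c_i^{x_i}e^{-c_i}/x_i!$ yields $\pi_\Gamma$; because $\Gamma$ is closed under the transitions, the balance equation restricted to $\Gamma$ is satisfied by the same per-complex argument, with $\pi(x-\zeta_k)$ set to zero when $x-\zeta_k\notin\Gamma$.

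\textbf{The main obstacle is the boundary bookkeeping.} The delicate point is that the mass-action rates vanish on the boundary via the indicators $1_{\{x_i\ge\nu_{ki}\}}$, whereas the Poisson product form assigns positive mass to all of $\Z^d_{\ge 0}$. One must check that the factorial cancellation and the per-complex grouping remain valid precisely on the boundary states, where some shifted arguments $x-\zeta_k$ leave the nonnegative orthant. The clean way to handle this is to observe that the convention $\lambda_k(x-\zeta_k)=0$ for $x-\zeta_k\notin\Z^d_{\ge 0}$ is automatically enforced by the very factorial structure of $\pi\lambda_k$, so the identity extends continuously across the boundary without special casing; making this rigorous is where the real care lies.
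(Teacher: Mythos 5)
Your proposal is correct and follows essentially the same route as the source: the paper itself offers no proof of this theorem, quoting it from \cite{ACK2010}, and the argument given there is exactly your substitution of the Poisson product form into \eqref{eq:stationary2}, the cancellation $\pi(x)\lambda_k(x)=\kappa_k c^{\nu_k}\pi(x-\nu_k)$, grouping by complexes to reduce to \eqref{eq:complex_balanced}, and the observation that the indicators (equivalently, the vanishing of the descending factorials) handle the boundary automatically. One small slip to fix: by your own cancellation identity, the per-complex equation should read $\pi(x-z)\sum_{\{k:\nu_k=z\}}\kappa_k c^{\nu_k}=\sum_{\{k:\nu_k'=z\}}\pi(x-\zeta_k)\lambda_k(x-\zeta_k)$, with $\pi(x-z)$ rather than $\pi(x)$ on the left (equivalently, the left side is $\pi(x)\sum_{\{k:\nu_k=z\}}\lambda_k(x)$), since both sides carry the common factor $\pi(x-z)$ after cancellation; summing the corrected identity over $z\in\C$ then recovers \eqref{eq:stationary2} as you intend.
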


Each irreducible component of the state space is necessarily contained in a single non-negative stoichiometric compatibility class (Definition \ref{def:stoich}). The choice of the complex balanced equilibrium point $c$ in the theorem is independent of  $\Gamma$ and the particular stoichiometric compatibility class containing it \cite{ACK2010}.
Since  $\Gamma\subset \Z^d_{\ge 0}$, it follows that
\begin{equation}\label{eq:ZV}
Z_\Gamma = \sum_{x\in \Gamma} \prod_{i = 1}^d  \frac{c_i^{x_i}}{x_i!}e^{-c_i} \le \sum_{x \in \Z^d_{\ge 0}}\prod_{i = 1}^d    \frac{c_i^{x_i}}{x_i!}e^{-c_i} = 1.
\end{equation}

\subsubsection{The classical scaling}
\label{sec:scaling}

We may convert from molecular counts to concentrations by scaling the counts by $V$, where $V$ is the volume  of the system times Avogadro's number.  Following \cite{ACK2010},  define $|\nu_k| =\sum_i \nu_{ki}$. Let $\{\kappa_k\}$ be a set of rate constants and define the  scaled rate constants, $ \kappa^V_k$, for the stochastic model in the following way,
\begin{equation}
 \kappa^V_k = \frac{\kappa_k}{V^{|\nu_k|-1}}
 \label{eq:kappas}
 \end{equation}
  (see \cite[Chapter~6]{Wilkinson2006}).   Let $x\in \Z^d_{\ge 0}$ be an arbitrary state of the system and denote the intensity function for the stochastic model by 
 \begin{equation*}
 	\lambda_k^V(x) = \frac{V \kappa_k }{V^{|\nu_k|}} \prod_{i = 1}^d \frac{x_i!}{(x_i-\nu_{ki})!}.
\end{equation*}
  Note that $\tilde x \eqdef V^{-1}x$ gives the concentrations in moles per
unit volume and that if $\tilde x = \Theta(1)$ (that is, if $x = \Theta(V)$), then by standard arguments
\begin{equation*}
	\lambda_k^V(x) \approx V \kappa_k \prod_{i=1}^d \tilde x_i^{\nu_{ki}} = V \lambda_k(\tilde x),
\end{equation*}
where the final equality determines $\lambda_k$, and justifies the definition of deterministic mass-action kinetics in \eqref{eq:massaction}.  

Denote the stochastic process determining the abundances  by $X^V\!(t)$ (see \eqref{eq:Rk2}).
Then, normalizing the  original process $X^V$ by $V$ and defining $\widetilde X^V \eqdef V^{-1}X^V$ yields 
\begin{equation*}
	\widetilde X^V(t) \approx \widetilde X^V(0) + \sum_k \frac{1}{V} Y_k\left( V \int_0^t  \lambda_k(\widetilde X^V(s)) ds \right)\zeta_k.
\end{equation*}
 Since the law of large numbers for the Poisson
process implies $V^{-1}Y(Vu)\approx u$, we may conclude that a good approximation to the process $\widetilde X^V$ is the function $x=x(t)$ defined as the solution to the ODE
\begin{equation*}
   \dot{x} = \sum_k \kappa_k  x^{\nu_{k}}(\nu_k' - \nu_k),
\end{equation*}
which is \eqref{eq:mass-action1}.
For a precise formulation of the above scaling argument, termed the \textit{classical scaling}, see \cite{AK2015}.

The following is an immediate corollary to Theorem \ref{thm:prodform_main}, and can also be found in \cite{ACK2010}.  The result  rests upon the fact that if $c$ is a complex balanced equilibrium for a given reaction network with rates $\{\kappa_k\},$ then $Vc$ is a complex balanced equilibrium  for the reaction network endowed with rates $\{\kappa^V_k\}$ of \eqref{eq:kappas}.

\begin{theorem}
Let $\{\S, \C, \Reac\}$ be a reaction network and let  $\{\kappa_k\}$ be a choice of rate constants.  Suppose that, modeled deterministically,  the system is complex balanced with a complex  balanced equilibrium $c \in \R^d_{>0}$.  For some $V>0$, let $\{\kappa^V_k\}$ be related to $\{ \kappa_k\}$ via \eqref{eq:kappas}.  Then the stochastically
  modeled system with intensities \eqref{eq:stoch_MA} and rate constants $\{\kappa^V_k\}$ has a stationary distribution on $\Z^d_{\ge 0}$ consisting of the product of Poisson distributions,
  \begin{equation}
    \pi^V(x) = \prod_{i=1}^d \frac{(Vc_i)^{x_i}}{x_i!}e^{-Vc_i}, \qquad \text{ for }  x \in
    \Z^d_{\ge 0}.
    \label{eq:309847}
  \end{equation}
  If $\Z^d_{\ge 0}$ is irreducible, then \eqref{eq:309847}
  is the unique stationary distribution.  
   If $\Z^d_{\ge 0}$ is not irreducible, then the stationary distribution, $\pi_\Gamma^V$, of an irreducible component of the state space $\Gamma\subset \Z^d_{\ge 0}$ is
  \begin{equation}
  \label{eq:89767896}
    \pi^V_{\Gamma}(x) = \frac{1}{Z_\Gamma^V} \prod_{i = 1}^d
    \frac{(Vc_i)^{x_i}}{x_i!} e^{-Vc_i}, \qquad \text{ for } x \in \Gamma,
  \end{equation}
  and $\pi^V_{\Gamma}(x) = 0$ otherwise, where $Z^V_{\Gamma}$ is a
  positive normalizing constant.
  \label{thm:rate_constants}
\end{theorem}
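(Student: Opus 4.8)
The plan is to reduce this theorem to a direct application of Theorem \ref{thm:prodform_main} by showing that the vector $Vc$ is a complex balanced equilibrium for the reaction network equipped with the rescaled rate constants $\{\kappa^V_k\}$. Since $c \in \R^d_{>0}$ and $V>0$, we have $Vc \in \R^d_{>0}$, so it is a legitimate candidate for a positive complex balanced equilibrium, and Theorem \ref{thm:prodform_main} will then yield the stated product-of-Poisson form with parameters $Vc_i$.

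First I would carry out the key algebraic bookkeeping. Fix a reaction index $k$ and recall $|\nu_k| = \sum_i \nu_{ki}$. Using the definition \eqref{eq:kappas} together with $(Vc)^{\nu_k} = \prod_i (Vc_i)^{\nu_{ki}} = V^{|\nu_k|} c^{\nu_k}$, one computes
\[
	\kappa^V_k (Vc)^{\nu_k} = \frac{\kappa_k}{V^{|\nu_k|-1}} \, V^{|\nu_k|}  c^{\nu_k} = V \kappa_k c^{\nu_k}.
\]
The essential point is that the surviving power of $V$ is exactly $V^1$ for \emph{every} reaction $k$, independently of $|\nu_k|$. Consequently, for each complex $z \in \C$, the complex balanced condition \eqref{eq:complex_balanced} for the pair $(Vc, \{\kappa^V_k\})$,
\[
	\sum_{\{k : \nu_k' = z \}} \kappa^V_k (Vc)^{\nu_k} = \sum_{ \{ k:\nu_k=z \} } \kappa^V_k (Vc)^{\nu_k},
\]
is obtained simply by multiplying both sides of the corresponding condition for $(c, \{\kappa_k\})$ by $V$. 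Since $c$ is complex balanced for $\{\kappa_k\}$ by hypothesis, this identity holds, and hence $Vc$ is a complex balanced equilibrium for the network with rates $\{\kappa^V_k\}$.

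With this in hand, the remainder is immediate: applying Theorem \ref{thm:prodform_main} to the network $\{\S, \C, \Reac\}$ with rate constants $\{\kappa^V_k\}$ and complex balanced equilibrium $Vc$ produces the stationary distribution \eqref{eq:309847} as a product of Poisson distributions with means $Vc_i$, together with the irreducibility and uniqueness statements. I would stress that the irreducible components $\Gamma$ of the state space, and hence the normalization and uniqueness claims, depend only on the reaction vectors $\zeta_k = \nu_k' - \nu_k$ and not on the numerical values of the rate constants; replacing $\{\kappa_k\}$ by $\{\kappa^V_k\}$ leaves the transition structure of the chain, and therefore its closed irreducible components, unchanged. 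Thus the three conclusions transfer verbatim from Theorem \ref{thm:prodform_main}.

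There is no substantial obstacle here, as the statement is an immediate corollary of the preceding theorem. The only place demanding care is the power-counting in the display above: one must confirm that the specific exponent $|\nu_k|-1$ chosen in \eqref{eq:kappas} is exactly what makes the volume factor collapse to a single, reaction-independent $V$, so that the complex balanced equations scale homogeneously. Any other exponent would fail to preserve complex balancedness, which is precisely why the classical scaling \eqref{eq:kappas} is the natural one for this correspondence.
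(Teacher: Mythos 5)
Your proposal is correct and follows exactly the route the paper takes: the paper presents this theorem as an immediate corollary of Theorem \ref{thm:prodform_main}, resting on precisely the fact you verify, namely that $\kappa^V_k (Vc)^{\nu_k} = V\kappa_k c^{\nu_k}$ for every $k$, so that $Vc$ is complex balanced for the network with rates $\{\kappa^V_k\}$. Your write-up simply makes explicit the power-counting and the rate-constant-independence of the irreducible components that the paper leaves implicit.
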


Note that Theorem \ref{thm:rate_constants} implies that a stationary distribution for the scaled model $\widetilde X^V$  is
\begin{equation}\label{eq:4309}
\tilde \pi^V\!(\tilde x^V) = \pi^V\!(V \tilde x^V), \quad \text{ for }\quad \tilde x^V \in \frac{1}{V}\Z^d_{\ge 0} .
\end{equation}

\section{Complex balanced systems}
\label{sec:scalings}

We are ready to state and prove our first  result.  For an increasing series of volumes $V_i$, $i=1,2,\ldots$, we consider converging sequences of points $\tilde x^{V_i}$ in $\frac{1}{V_i} \Z^d_{\ge 0}$. To ease the notation we omit the index $i$ and write, for example, $\lim_{V\to \infty} \tilde x^V$ instead of $\lim_{i \to \infty} \tilde x^{V_i}$.  

\begin{theorem}\label{comp-main}
Let $\{\S, \C, \Reac\}$ be a  reaction network and let  $\{\kappa_k\}$ be a choice of rate constants.  Suppose that, modeled deterministically,  the system is complex balanced.  For  $V>0$, let $\{\kappa^V_k\}$ be related to $\{ \kappa_k\}$ via \eqref{eq:kappas}. Fix a sequence of points  $\tilde x^V\in \frac1V \Z^d_{\ge 0}$  for which  $\lim_{V\to \infty}\tilde {x}^V = \tilde x \in \R^d_{>0}$.  Further let $c$ be the unique complex balanced equilibrium within the positive stoichiometric compatibility class of $\tilde x$. 
  
    Let $\pi^V$ be given by \eqref{eq:309847} and let $\tilde \pi^V$ be as in \eqref{eq:4309}, then
  \begin{equation*}
  	\lim_{V\to \infty}\left[- \frac{1}{V}\ln(\tilde \pi^V\!(\tilde x^V)) \right]= \V(\tilde x),
  \end{equation*}
  where $\V$ satisfies \eqref{eq:standardLyapunov}. In particular, $\V$ is a Lyapunov  function (Definition \ref{def:lya}).
 
  Further, suppose  $\Gamma^V\!\subset Z^d_{\ge 0}$ is an irreducible component of the state space for the Markov model with rate constants $\{\kappa_k^V\}$ such that $V \cdot \tilde x^V \in \Gamma^V$. Let $\pi^V_{\Gamma^V}$ be given by \eqref{eq:89767896}.  For $\tilde w^V \in \frac{1}{V}\Gamma^V$, define $\tilde \pi^V_{\Gamma^V}(\tilde w^V) \eqdef \pi^V_{\Gamma^V}(V\tilde w^V)$, then  
  \begin{align}\label{eq:78969786}
  	\lim_{V\to \infty} V^{-1} \ln(Z^V_{\Gamma^V}) = 0,
  \end{align}
  and
  \begin{equation}\label{eq:456789}
  	\lim_{V\to \infty} \left[ - V^{-1}\ln(\tilde \pi_{\Gamma^V}^V\!(\tilde x^V)) \right]= \V(\tilde x),
  \end{equation}
  where $\V$ satisfies \eqref{eq:standardLyapunov}. In particular, $\V$ is a Lyapunov  function (Definition \ref{def:lya}).
\label{thm:main}
\end{theorem}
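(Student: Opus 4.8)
The plan is to handle the two parts in turn, with the unconditioned limit serving as the engine for both. For the unconditioned distribution I would expand $-V^{-1}\ln(\tilde\pi^V(\tilde x^V))$ using \eqref{eq:309847} and \eqref{eq:4309} to obtain
\[
-\frac1V\ln\bigl(\tilde\pi^V(\tilde x^V)\bigr)=\sum_{i=1}^d\Bigl[-\tilde x_i^V\ln(Vc_i)+\tfrac1V\ln\bigl((V\tilde x_i^V)!\bigr)+c_i\Bigr].
\]
Applying Stirling's formula \eqref{eq:249850} to each factorial, and using that $c_i>0$ and $\tilde x_i^V\to\tilde x_i>0$ (so $V\tilde x_i^V\to\infty$ and the remainder is $O(V^{-1}\ln V)\to0$), the $\ln(Vc_i)$ and $\ln(V\tilde x_i^V)$ terms combine to $\tilde x_i^V(\ln\tilde x_i^V-\ln c_i)$, leaving $\sum_i[\tilde x_i^V(\ln\tilde x_i^V-\ln c_i-1)+c_i]$, which tends to $\V(\tilde x)$ of \eqref{eq:standardLyapunov} by continuity. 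This is exactly the computation of Example \ref{ex:30984}; the only care needed is uniform control of the Stirling remainder and the fact that each coordinate stays bounded away from $0$ for large $V$.

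For the conditioned distribution, note that $\pi^V_{\Gamma^V}=(Z^V_{\Gamma^V})^{-1}\pi^V$ on $\Gamma^V$, so
\[
-\frac1V\ln\bigl(\tilde\pi^V_{\Gamma^V}(\tilde x^V)\bigr)=\frac1V\ln\bigl(Z^V_{\Gamma^V}\bigr)-\frac1V\ln\bigl(\tilde\pi^V(\tilde x^V)\bigr),
\]
and hence \eqref{eq:456789} follows from the first part once \eqref{eq:78969786} is proved. The upper bound in \eqref{eq:78969786} is immediate: since $\Gamma^V\subset\Z^d_{\ge0}$ and $\pi^V$ is a probability measure on $\Z^d_{\ge0}$, the estimate \eqref{eq:ZV} gives $Z^V_{\Gamma^V}\le1$, so $V^{-1}\ln(Z^V_{\Gamma^V})\le0$.

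The lower bound is the crux and, I expect, the main obstacle. Since $\V$ vanishes only at $c$, the naive bound $Z^V_{\Gamma^V}\ge\pi^V(V\tilde x^V)$ only yields $\liminf V^{-1}\ln Z^V_{\Gamma^V}\ge-\V(\tilde x)$, which is negative away from $c$. Instead I would produce states $y^V\in\Gamma^V$ with $y^V/V\to c$ and use $Z^V_{\Gamma^V}\ge\pi^V(y^V)$ together with the Stirling computation applied at $y^V$ to get $V^{-1}\ln\pi^V(y^V)\to-\V(c)=0$. To build $y^V$, note $c-\tilde x\in S$, so the distance from $Vc$ to the affine space $V\tilde x^V+S$ equals $V\,|P_{S^\perp}(\tilde x-\tilde x^V)|=o(V)$; since the reaction lattice $\mathbb Z\{\zeta_k\}$ spans $S$ over $\R$, a point of the coset $V\tilde x^V+\mathbb Z\{\zeta_k\}$ lies within the (bounded) covering radius of the projection of $Vc$, giving $|y^V-Vc|=o(V)$, hence $y^V/V\to c$. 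As $c\in\R^d_{>0}$, the coordinates of $y^V$ grow like $Vc_i\to\infty$, so $y^V$ is a deep interior lattice point of the compatibility class of $V\tilde x^V$. The delicate remaining step is to confirm that such an interior point actually lies in the irreducible component $\Gamma^V$ containing $V\tilde x^V$: this uses weak reversibility of complex balanced networks (so that reachability is symmetric) and a reachability argument showing that states bounded away from the boundary by more than the reaction-vector sizes communicate; here I would appeal to the irreducibility and recurrence results of \cite{PCK2014,GK2013}.

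Finally, combining the two limits yields \eqref{eq:456789}, and it remains to record that $\V$ is a Lyapunov function. Writing $\V(x)=\sum_i g_i(x_i)$ with $g_i(t)=t\ln t-t\ln c_i-t+c_i$, each $g_i$ is strictly convex with unique minimum $g_i(c_i)=0$, giving condition~(1) of Definition \ref{def:lya} on the relative interior of the nonnegative compatibility class, with equilibrium $c$. For condition~(2), $\nabla\V(x)=(\ln(x_i/c_i))_i$ is precisely the gradient appearing in the classical Horn--Jackson analysis, so $\nabla\V\cdot f\le0$ with equality only at $c$ holds because $c$ is complex balanced; this is the standard CRNT estimate, which I would cite from \cite{HornJack72,FeinbergLec79} rather than reprove.
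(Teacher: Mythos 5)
Your proposal is correct and follows essentially the same route as the paper: the Stirling expansion for the potential, the upper bound $Z^V_{\Gamma^V}\le 1$ via \eqref{eq:ZV}, and a lower bound from a state $y^V\in\Gamma^V$ with $y^V/V\to c$, whose membership in $\Gamma^V$ rests on exactly the deep-interior identification of the irreducible component with the lattice coset $V\tilde x^V+\mathrm{span}_{\Z}\{\zeta_k\}$ from \cite{PCK2014} that the paper invokes (remark below Lemma 4.1 there). The only variation is harmless: you build $y^V$ by projecting $Vc$ onto the coset of $V\tilde x^V$ and using the finite covering radius of the reaction lattice, obtaining $|y^V-Vc|=o(V)$, whereas the paper rounds $V\tilde c^V$, the complex balanced equilibrium of the compatibility class of $V\tilde x^V$, and cites \cite{CraciunShiu09} for $\tilde c^V\to c$; both yield $y^V/V\to c$, which is all the Stirling estimate requires.
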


\begin{proof}
We prove the second statement.  The proof of the first is the same with the exception that $Z_{\Gamma^V}^V \equiv 1$.

We  first consider the limit \eqref{eq:78969786}.  Begin by supposing that there is a sequence $\tilde y^V\in\frac{1}{V}\Gamma^V$ for which $\tilde y^V\to c$.  In this case, 
\[
	1 \ge Z_{\Gamma^V}^V= \sum_{y\in\Gamma^V}\prod_{i = 1}^d\frac{(Vc_i)^{y_i}}{y_i!} e^{-Vc_i} \ge \prod_{i = 1}^d\frac{(Vc_i)^{V\tilde y^V_i}}{(V\tilde y^V_i)!} e^{-Vc_i}\ge C\prod_{i = 1}^d\frac{1}{\sqrt{V \tilde y_i^V}}\left(\!\frac{c_i}{\tilde y_i^V}\!\right)^{\!V \tilde y_i^V}\!\! e^{V(\tilde y^V_i-c_i)},
\]
where the first inequality follows from \eqref{eq:ZV}  and the third from an application of Stirling's formula ($C$ is a constant). 
Taking the logarithm and dividing by $V$, it follows that $\lim_{V\to \infty} V^{-1} \ln(Z_{\Gamma^V}^V) = 0$.
Thus, the limit \eqref{eq:78969786} will be shown so long as we can prove the existence of the sequence $\tilde y^V\in \frac{1}{V}\Gamma^V$ converging to the complex balanced equilibrium $c$.

For $M>0$, define the set $(M + \Z^d_{\ge 0}) = \{ w \in \Z^d : w_i \ge M \text{ for each } i \in \{1,\dots, d\}\}$.  From the remark below  Lemma 4.1 in \cite{PCK2014}, there is an $M_0>0$ so that for all $V$ large enough
\begin{equation}\label{eq:67896798}
	\Gamma^V \cap (M_0 + \Z^d_{\ge 0}) = \big(V\cdot \tilde x^V + \text{span}_{\Z}\{\zeta_k\}\big) \cap (M_0 + \Z^d_{\ge 0}).
\end{equation}
Thus, for $V$ large enough, $\Gamma^V$ has constant positive density on its stoichiometric compatibility class.  Let $V \cdot \tilde c^V$ be the unique complex balanced equilibrium in the positive stoichiometric compatibility class of $V\cdot \tilde x^V$.  It follows that $\tilde c^V$ is the unique complex balanced equilibrium in the positive stoichiometric compatibility class of $\tilde x^V$, from which we may conclude that $\lim_{V\to \infty}(\tilde c^V - c)=0$ (since $\tilde x^V \to \tilde x$) \cite{CraciunShiu09}.  Finally, define $\tilde y^V$ via the relation 
\[
	V \cdot \tilde y^V =  [ V \tilde c^V ],
\]
where $[V\tilde c^V]$ is a minimizer of $f(z) = |z - V\tilde c^V|$ over the set  $\Gamma^V \cap (M_0 + \Z^d_{\ge 0})$.  Note that $\tilde y^V \in \frac1V\Gamma^V$.  From \eqref{eq:67896798}, we see that $\tilde y^V - \tilde c^V = O(V^{-1})$, which, when combined with  $\lim_{V\to \infty}(\tilde c^V - c) \to 0$, gives the desired result.

We now turn to \eqref{eq:456789}.  We have 
\begin{align*}
	-V^{-1}\ln\big(&\tilde \pi_{\Gamma^V}^V(\tilde x^V)\big) = - V^{-1}\ln\left( \prod_{i = 1}^d e^{-V  c_i } \frac{(V  c_i)^{V \tilde x_i^V}}{(V \tilde x_i^V)!}\right) + V^{-1} \ln(Z^V_{\Gamma^V})\\
	&=-V^{-1} \sum_{i=1}^d \left[ -V c_i +  (V\tilde x_i^V)\ln(V) + (V\tilde x_i^V) \ln( c_i)  - \ln\left((V \tilde x_i^V)!\right) \right] + V^{-1} \ln(Z^V_{\Gamma^V}).
\end{align*}
Applying Stirling's formula \eqref{eq:249850} to the final term and performing some algebra yields
\begin{align*}
	-V^{-1}\ln(\tilde \pi_{\Gamma^V}^V(\tilde x^V)) &= -V^{-1} \sum_{i=1}^d \left\{ -V c_i +  (V\tilde x^V_i)\ln(V) + (V\tilde x^V_i) \ln( c_i)\right.  \\
	&\hspace{.2in}- \left. \left[ (V\tilde x^V_i) \ln(V\tilde x^V_i) - (V\tilde x^V_i) + O(\ln(V\tilde x^V_i))\right]\right\} + V^{-1} \ln(Z^V_{\Gamma^V})\\[1ex]
	&= \sum_{i=1}^d \left[\tilde x^V_i \{\ln(\tilde x^V_i) - \ln( c_i) - 1\} +  c_i\right] + O(V^{-1}\ln(V\tilde x_i^V))  + V^{-1} \ln(Z^V_{\Gamma^V}).
\end{align*}
The sum is the usual Lyapunov function $\V$, and the result is shown after letting $V\to \infty$, utilizing \eqref{eq:78969786}, and recalling that $\tilde x^V \to \tilde x \in \R^d_{>0}$.
\end{proof}

The theorem above can  be applied to Example \ref{ex:30984}. The unique equilibrium point given in \eqref{eq:eq-example} is easily seen to fulfil the complex balanced condition in \eqref{eq:complex_balanced}.

\section{Non-complex balanced systems}
\label{sec:example_BD}

\subsection{Birth-death processes and reaction networks}

In this section we will study reaction networks that also are birth-death processes. Many results are known for birth-death processes.  In particular, a characterization of the stationary distribution can be accomplished \cite{karlin}.

Let $\{\S, \C, \Reac\}$ be a  reaction network with one species only, $\S=\{S\}$, and assume all reaction vectors are either $\zeta_k=(-1)$ or $\zeta_k=(1)$. This implies that the number of molecules of $S$ goes up or down by one each time a reaction occurs. 
 For convenience, we re-index the reactions and the reaction rates in the following way.  By assumption, a reaction of the form $n S\to n'S$ will either have $n'=n+1$ or $n'=n-1$. In the former case we index the reaction by $n$ and denote the rate constant by $\kappa_n$ and in the latter case by $-n$ and $\kappa_{-n}$, respectively.  Note  that this stochastically modeled reaction network  can be considered as a birth-death process with birth and death rates
  \begin{align}\label{bd}
  \begin{split}
p_i &= \sum_{\{n\vert \zeta_n=(1)\}}  \lambda^V_n(i) = \sum_{\{n\ge 0\}}  \lambda^V_n(i), \\
q_i &= \sum_{\{n\vert \zeta_n=(-1)\}}    \lambda^V_n(i)= \sum_{\{n<0\}}  \lambda^V_n(i),
\end{split}
\end{align}
for $i\ge 0$, respectively. 

If the stochastically modeled system has absorbing  states (i.e.~states for which $p_i=q_i=0$) we make the following modification to the intensity functions of the system. Let $i_0\in\Z_{\ge 0}$ be the smallest value such that (i) all birth rates of $i_0$ are non-zero, that is, $\lambda_n(i_0)>0$ for $n\ge 0$, and (ii) all death rates of $i_0+1$ are non-zero, that is, $\lambda_n(i_0+1)>0$ for $n<0$.  We modify the system by letting $\lambda_n(i_0)=0$ for $n<0$. Note that the modified system has a lowest state $i_0$, which is not absorbing.  

As an example of the above modification, consider the system with network
\begin{align}
	3S \overset{\kappa_{-3}}{\to} 2S, \qquad 4S \overset{\kappa_{4}}{\to} 5S.
\label{eq:98767896}
	\end{align}
This model has rates $\lambda_4(x) = \kappa_4 x(x-1)(x-2)(x-3)$ and $\lambda_{-3}(x) = \kappa_{-3} x(x-1)(x-2)$.  The modified system would simply take $\lambda_{-3}(4) = 0$.

Let $n_u$ ($u$ for `up') be the largest $n$ for which $\kappa_n$ is a non-zero reaction rate and similarly let $n_d$ ($d$ for `down') be the largest $n$ for which $\kappa_{-n}$ is a non-zero rate constant.    
For the network \eqref{eq:98767896}, $n_u = 4$ and $n_d=3.$

\begin{theorem}    \label{thm:bd}
  Let $\{\S, \C, \Reac\}$ be a  reaction network with one species only.  Assume that all reaction vectors are of the form $\zeta_n=(-1)$ or $\zeta_n=(1)$, and assume that there is at least one of each form. Let  $\{\kappa_n\}$ be a choice of rate constants and assume, for some $V>0$, that $\{\kappa^V_n\}$ is related to $\{ \kappa_n\}$ via \eqref{eq:kappas}. 
  Then a stationary distribution, $\pi^V$, for the modified birth-death process with rates \eqref{bd} and rate constants $\kappa_n^V$ exists on the irreducible component $\Gamma=\{i| i\ge i_0\}$ if and only if either of the following holds,
\begin{itemize}\label{cond-bd}
\item[(1)] $n_d>n_u$, \quad or
\item[(2)] $n_d=n_u$ \quad and \quad $\kappa_{-n_d}>\kappa_{n_u}$,
\end{itemize}
in which case such a $\pi^V$ exists for each choice of $V>0$.  

If either of conditions (1) or (2) holds, and if $\tilde x^V \to \tilde x \in (0,\infty)$, where each $\tilde x^V \in \frac1V \Z_{\ge 0}$, then 
\begin{equation}\label{gx}
	\lim_{V\to \infty} -V^{-1}\ln(\tilde \pi^V(\tilde x^V)) = g(\tilde x)\eqdef - \int_{\tilde x_{\max}}^{\tilde x} \ln\left(\frac{ \sum_{n\ge 0} \kappa_nu^{\nu_{n}} }{  \sum_{n< 0} \kappa_nu^{\nu_{n}}  }\right) du,
\end{equation}
where $\tilde \pi^V$ is the stationary distribution for the  stochastic model scaled by $V>0$ and state space $\frac1V \Z_{\ge 0}$ (as in \eqref{eq:4309}), and $\tilde x_{\max}$ is a value of $\tilde x\in [0,\infty)$ (potentially not unique) that maximizes the integral
$$   \int_0^{\tilde x} \ln\left(\frac{ \sum_{n\ge 0} \kappa_nu^{\nu_{n}} }{  \sum_{n< 0} \kappa_nu^{\nu_{n}}  }\right) du. $$ 
Further, the function $g(\tilde x)$ of \eqref{gx}  fulfills condition (2) in Definition \ref{def:lya}; that is, $g(\tilde x)$ decreases along paths of the deterministically modeled system with rate constants $\{\kappa_n\}$.
 \end{theorem}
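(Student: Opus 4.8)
The plan is to exploit that a one--species birth--death chain is automatically reversible, so that its stationary distribution is given in closed form by the detailed balance recursion; both the existence criterion and the scaling limit then follow from the asymptotics of the resulting product.

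First I would record the detailed balance identity $\pi^V(i)p_i = \pi^V(i+1)q_{i+1}$, valid on the irreducible component $\Gamma=\{i\ge i_0\}$, which yields
\[
	\pi^V(i) = \frac{1}{Z^V}\prod_{j=i_0}^{i-1}\frac{p_j}{q_{j+1}}, \qquad Z^V = \sum_{i\ge i_0}\prod_{j=i_0}^{i-1}\frac{p_j}{q_{j+1}},
\]
so that a stationary distribution exists exactly when $Z^V<\infty$. To decide this I would apply the ratio test to $a_i^V \eqdef \prod_{j=i_0}^{i-1} p_j/q_{j+1}$, for which $a_{i+1}^V/a_i^V = p_i/q_{i+1}$. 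By \eqref{eq:stoch_MA}--\eqref{eq:kappas}, $p_i$ is a polynomial in $i$ of degree $n_u$ and $q_{i+1}$ one of degree $n_d$, with leading coefficients proportional to $\kappa_{n_u}$ and $\kappa_{-n_d}$; hence $p_i/q_{i+1}\to 0$ when $n_d>n_u$ and $p_i/q_{i+1}\to \kappa_{n_u}/\kappa_{-n_d}$ when $n_d=n_u$, giving convergence precisely under conditions (1) and (2). (The non-generic boundary $n_d=n_u$, $\kappa_{-n_d}=\kappa_{n_u}$ would require the next-order term of the expansion and I would treat it separately.)

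Next I would pass to the scaling limit by writing $-V^{-1}\ln\tilde\pi^V(\tilde x^V) = -V^{-1}\ln a_{V\tilde x^V}^V + V^{-1}\ln Z^V$. The key estimate is that for $j=Vu$ with $u$ bounded away from $0$, Stirling's formula \eqref{eq:249850} (or the elementary $j!/(j-n)! = j^n(1+O(1/j))$) gives $p_j\approx V\sum_{n\ge0}\kappa_n u^{\nu_n}\eqdef V\beta(u)$ and $q_{j+1}\approx V\sum_{n<0}\kappa_n u^{\nu_n}\eqdef V\delta(u)$, so $p_j/q_{j+1}\to\beta(u)/\delta(u)$. Consequently
\[
	-\frac{1}{V}\ln a_{V\tilde x^V}^V = -\frac{1}{V}\sum_{j=i_0}^{V\tilde x^V-1}\ln\frac{p_j}{q_{j+1}}
\]
is a Riemann sum of mesh $1/V$ converging to $-\int_0^{\tilde x}\ln(\beta(u)/\delta(u))\,du\eqdef -H(\tilde x)$. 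For the normalizing constant I would run a Laplace/Varadhan-type argument: the lower bound $V^{-1}\ln Z^V\ge V^{-1}\ln a_{V\tilde x_{\max}}^V\to H(\tilde x_{\max})$ is immediate, while the matching upper bound follows by splitting the sum into a block $\{i_0\le i\le VR\}$, on which $V^{-1}\ln(\#\text{terms}\cdot\max_i a_i^V)\to\sup_{[0,R]}H$, and a tail $\{i>VR\}$ controlled by the geometric or superexponential decay from the existence step; letting $R\to\infty$ gives $V^{-1}\ln Z^V\to\sup_{\tilde x\ge0}H(\tilde x)=H(\tilde x_{\max})$. Adding the two limits yields $-V^{-1}\ln\tilde\pi^V(\tilde x^V)\to H(\tilde x_{\max})-H(\tilde x)=g(\tilde x)$, which is \eqref{gx}, and identifies $\tilde x_{\max}$ as the maximizer of $H$.

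Finally, for the Lyapunov property I would differentiate $g$: by the fundamental theorem of calculus $g'(\tilde x)=-\ln(\beta(\tilde x)/\delta(\tilde x))$, while the deterministic vector field is $f(\tilde x)=\beta(\tilde x)-\delta(\tilde x)$ by \eqref{eq:mass-action1}. Hence $g'(\tilde x)f(\tilde x)=-\ln(\beta(\tilde x)/\delta(\tilde x))\,(\beta(\tilde x)-\delta(\tilde x))$, and since $\ln(\beta/\delta)$ and $\beta-\delta$ always share the same sign, this product is $\le0$, with equality iff $\beta(\tilde x)=\delta(\tilde x)$, i.e.\ iff $\tilde x$ is an equilibrium; this is exactly condition (2) of Definition \ref{def:lya}. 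I expect the main obstacle to lie in the analytic control of the scaling step rather than in the algebra: making the Riemann-sum convergence rigorous requires uniform Stirling estimates together with a separate treatment of the contribution of small $j$ (equivalently $u$ near $0$), where $\beta/\delta$ may blow up and the integrand is only conditionally integrable, and the Laplace argument for $Z^V$ demands tail bounds that are uniform in $V$.
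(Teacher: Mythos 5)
Your proposal is correct and follows essentially the same route as the paper's own proof: the explicit birth--death product formula for $\pi^V$ (the paper cites \cite{karlin}, you derive it from reversibility), a ratio-type test on $p_{i-1}/q_i$ for finiteness of $Z^V$, a Riemann-sum limit for the product term, a Laplace-type upper/lower bound showing $V^{-1}\ln Z^V \to \int_0^{\tilde x_{\max}}\ln\bigl(\beta(u)/\delta(u)\bigr)\,du$ (the paper splits at $m_V=\lfloor mV\rfloor+1$ with a geometric tail, exactly your head-plus-tail decomposition), and the identical differentiation argument for the Lyapunov property. The caveats you flag---the borderline case $n_d=n_u$, $\kappa_{-n_d}=\kappa_{n_u}$ and the integrability of the logarithmic singularity near $u=0$---are treated no more carefully in the paper itself, so your proposal matches its level of rigor.
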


\begin{proof}
Since all reactions have $\zeta_n=(1)$ or $\zeta_n=(-1)$ it follows that the system is equivalent to a birth-death process with  birth and death rates \eqref{bd}. As in the discussion below \eqref{bd}, let $i_0$ be the smallest value the chain may attain. Potentially after modifying the system as detailed above, we have that  $p_i>0$ for all $i\ge i_0$ and  $q_i>0$ for all $i\ge i_0+1$.
 Hence, $\Gamma=\{i\in \Z|i\ge i_0\}$ is irreducible and the stationary distribution, if it exists, is given by (see \cite{karlin})
\begin{align*}
	\pi^V\!(x) = \frac{1}{Z^V} \prod_{i = i_0+1}^x \frac{p_{i-1}}{q_{i}} = \frac{1}{Z^V} \frac{p_{i_0}\cdots p_{x-1}}{q_{i_0+1} \cdots q_x} , \quad  x\ge i_0,
\end{align*}
where the empty product $\Pi_{i = i_0+1}^{i_0}$ is taken to be equal to 1, and
the partition function $Z^V$ satisfies
\begin{equation}\label{eq:partition}
Z^V =\sum_{x=i_0}^\infty \prod_{i = i_0+1}^x \frac{p_{i-1}}{q_{i}}.
\end{equation}
Let $\delta=n_d-n_u$. Note that for $\epsilon>0$ arbitrarily small, there exists an $m>0$ such that
\begin{equation}\label{eq:bounds}
(1+\epsilon)\frac{V^\delta}{i^\delta} \frac{ \kappa_{n_u}}{\kappa_{-n_d}}\,\ge\,	\frac{p_{i-1}}{q_i} \,\ge\,  (1-\epsilon)\frac{V^\delta}{i^\delta} \frac{ \kappa_{n_u}}{\kappa_{-n_d}} \quad \text{for }\quad i> mV,
\end{equation}
  for all $V>0$. Hence, 
\begin{equation*}
	Z^V = \Theta\left(\sum_{i=i_0}^\infty  \frac{V^{\delta i}}{(i!)^\delta}\left( \frac{\kappa_{n_u}}{\kappa_{-n_d}}\right)^{\!i}(1+\epsilon)^i  \right),
\end{equation*}
which is finite if and only if one of the two conditions (1) and (2) in the theorem is fulfilled, in which case it is finite for all $V>0$. If $\delta=0$, one should choose $\epsilon$ such that $(1+\epsilon)\kappa_{n_u}/\kappa_{-n_d}<1$. Since a stationary distribution exists if and only if $Z^V$ is finite (see \cite{karlin}), 
this concludes the first part of the theorem.

 We assume now that the stationary distribution exists, that is, that one of the two conditions (1) and (2) are fulfilled, and consider the non-equilibrium potential.
 Letting $\tilde x^V = V^{-1}x$ with $x\ge i_0$, the scaled non-equilibrium potential takes the form
\begin{align}
	-V^{-1}\ln(\tilde \pi^V\!(\tilde x^V))&= -V^{-1} \ln(\pi^V\!(V\tilde x^V))\notag\\[1ex]
	&= -V^{-1}\left[ \sum_{i=i_0+1}^{V\tilde x^V} (\ln(p_{i-1}) - \ln (q_i)) \right] +V^{-1}\ln(Z^V).
	\label{eq:98879}
\end{align}
Using the definitions of $p_i$, $q_i$ and $\lambda_n^V\!(i)$,  the  first term  in \eqref{eq:98879} becomes
\begin{equation*}
	 -V^{-1}\sum_{i=i_0+1}^{V\tilde x^V} \left[ \ln\left(\sum_{n\ge 0} \kappa_n\frac{(i-1)(i-2)\cdots (i-|\nu_{n}|)}{V^{|\nu_{n}|-1}} \right)- \ln\left(\sum_{n< 0} \kappa_n\frac{i(i-1)\cdots (i-|\nu_{n}|+1)}{V^{|\nu_{n}|-1}} \right)\right].
\end{equation*}
Noting that this is a Riemann sum approximation, we have for $\tilde x^V\to\tilde x\in (0,\infty)$,
\begin{align}\label{eq:riemann}
	-V^{-1}\sum_{i=i_0+1}^{V\tilde x^V}\left[ \ln(p_{i-1}) - \ln(q_i)\right] &\to - \int_0^{\tilde x} \ln\left(\frac{ \sum_{n\ge 0} \kappa_nu^{\nu_{n}} }{  \sum_{n< 0} \kappa_nu^{\nu_{n}}  }\right) du \eqdef g_1(\tilde x),
\end{align}
as $V \to \infty$. 
 
We next turn to the second term of \eqref{eq:98879}. First, we consider the infinite series in equation \eqref{eq:partition}.  By \eqref{eq:bounds}, for $\epsilon>0$ small enough there is an $m>0$ so that if $i>  mV$,  then
 \begin{equation}\label{eq:term}
  \frac{p_{i-1}}{q_i}\le 
  (1+\epsilon)\frac{k_{n_u}}{k_{-n_d}}\frac{1}{m}\eqdef \beta<1.
  \end{equation}
Let $m_V=\lfloor  mV\rfloor+1$. Hence, it follows that the tail of the partition function $Z^V$ fulfills
\begin{align}\label{eq:termMv}
\sum_{x=m_V}^\infty\prod_{i=i_0+1}^x \frac{p_{i-1}}{ q_i} &=\left(\prod_{i=i_0+1}^{m_V} \frac{p_{i-1}}{ q_i}\right)\sum_{x=m_V}^\infty\prod_{i=m_V+1}^x \frac{p_{i-1}}{ q_i} \nonumber \\
	&\le \left(\prod_{i=i_0+1}^{m_V} \frac{p_{i-1}}{ q_i}\right)\sum_{x=m_V}^\infty  \beta^{x-m_V} \nonumber \\
	& = \left(\prod_{i=i_0+1}^{m_V} \frac{p_{i-1}}{ q_i} \right)\frac{1}{1-\beta}.
\end{align}
Next we bound  $Z^V$ above, using \eqref{eq:termMv}, 
\begin{align}\label{eq:partition2}
	Z^V & =\sum_{x=i_0}^\infty \prod_{i = i_0+1}^x \frac{p_{i-1}}{q_{i}}
 \nonumber \\
	 & \le \sum_{x=i_0}^{m_V-1}\prod_{i = i_0+1}^x \frac{p_{i-1}}{q_{i}} +  \left(\prod_{i=i_0+1}^{m_V} \frac{p_{i-1}}{ q_i} \right)\frac{1}{1-\beta} \nonumber \\
    &=	\sum_{x=i_0}^{m_V} \exp\left(\sum_{i=i_0+1}^{x}  \left[\ln(p_{i-1}) - \ln(q_i)\right] \,-\delta_{m_V}(x)\ln(1-\beta)\right) \nonumber \\
    & \le\frac{1}{1-\beta}\sum_{x=i_0}^{m_V} \exp\left(\sum_{i=i_0+1}^{x} [ \ln(p_{i-1}) - \ln(q_i)] \right),
\end{align}
with the convention that the empty sum is zero, and 
where $\delta_{a}(x)$ is an indicator function that takes the value $1$ if $x=a$, and is zero otherwise. In the last inequality we have used that $-\delta_{a}(x)\ln(1-\beta)\le -\ln(1-\beta)$.

Consider the right side of \eqref{eq:partition2}. Let $x_V$ be the value of $x\le m_V$ for which the sum attains it maximum. Hence, we have
\begin{align}\label{eq:partition3}
	Z^V &  \le\frac{m_V}{1-\beta}\exp\left(\sum_{i=i_0+1}^{x_V}  \ln(p_{i-1}) - \ln(q_i) \right).
\end{align}
The sequence $V^{-1}x_V\in [0,V^{-1}m_V]\subseteq [0,m+1]$ has an accumulation point $\tilde x_{\max}$ in $[0,m+1]$ since the interval is compact. Using \eqref{eq:riemann} and $m_V=\lfloor mV\rfloor +1$, we obtain from \eqref{eq:partition3}
\begin{align}\label{eq:bound-down}
	\limsup_{V\to\infty} V^{-1} \ln (Z^V)  & \le \int_0^{\tilde x_{\max}}\!\! \ln\left(\frac{ \sum_{n\ge 0} \kappa_nu^{\nu_{n}} }{  \sum_{n< 0} \kappa_nu^{\nu_{n}}  }\right) du \eqdef g_0.
\end{align}
Note that $\tilde x_{\max}$ is a global maximum of the integral on the entire $[0,\infty)$ (though it might not be unique): according to \eqref{eq:term}, the terms in the inner sum  in \eqref{eq:partition2} are negative for $x> x_V$.

To get a lower bound for $Z^V$, we choose a sequence of points $\tilde x^V\in \frac{1}{V}\Z_{\ge 0}$, such that $\tilde x^V\to \tilde x_{\max}$ as $V\to\infty$. Then, with $x_V=V\tilde x^V$,
\begin{align*}
	Z^V & \ge  \frac{p_{i_0}\cdots p_{x_V-1}}{q_{i_0+1}\cdots q_{x_V}},
\end{align*}
and consequently,
\begin{align}\label{eq:bound-up}
	\liminf_{V\to\infty} V^{-1}\ln(Z^V) & \ge  \int_0^{\tilde x_{\max}}\!\! \ln\left(\frac{ \sum_{n\ge 0} \kappa_nu^{\nu_{n}} } {  \sum_{n< 0} \kappa_nu^{\nu_{n}}  }\right) du =g_0,
\end{align}
by arguing as in \eqref{eq:riemann}.
Combining \eqref{eq:bound-down} and \eqref{eq:bound-up} yields the desired result that $V^{-1}\ln(Z^V)\to g_0$ as $V\to \infty$.

Hence, we may conclude that the non-equilibrium potential converges to the function $g(\tilde x)=g_1(\tilde x)+g_0$, as stated in the theorem. To conclude the proof, we only need to confirm that $g$ fulfills condition (2) in Definition \ref{def:lya}, which we verify by differentiation,
\begin{align*}
	\frac{d}{dt} g(x(t)) &= g'(x(t)) x'(t)\\
	&= -\ln\left(\frac{ \sum_{n\ge 0} \kappa_nx^{\nu_{n}} }{  \sum_{n< 0} \kappa_nx^{\nu_{n}}  }\right) \cdot  \left( \sum_{n\ge 0} \kappa_nx^{\nu_{n}} -  \sum_{n< 0} \kappa_nx^{\nu_{n}} \right).
\end{align*}
This is  strictly negative unless
$$ \sum_{n\ge 0} \kappa_nx^{\nu_{n}} -  \sum_{n< 0} \kappa_nx^{\nu_{n}}=0,$$
in which case we are at an equilibrium.
\end{proof}

For this particular class of systems we have
\begin{align*}
	\dot x &= \sum_{n\ge 0} \kappa_nx^{\nu_{n}} -  \sum_{n< 0} \kappa_nx^{\nu_{n}},
\end{align*}
so that the ratio in equation \eqref{gx} is simply  the ratio of the two terms in the equation above.
The local minima and maxima of $g(\tilde x)$ are therefore the equilibrium points of the deterministically modeled system. Further, by inspection, it can be seen that $g(\tilde x_{\text{max}})=0$  and $g(\tilde x)\to\infty$ as $\tilde x\to\infty$. If none of the extrema of $g(\tilde x)$ are plateaus, then it follows that asymptotically stable and unstable equilibria must alternate and that the largest equilibrium point is asymptotically stable (Definition \ref{def:lya}). Around each of the stable equilibria the function $g(\tilde x)$ is a Lyapunov function.

\begin{example}
\label{example:10}
 Consider the following network which has three equilibria (for appropriate choice of rate constants), two of which may be stable,
\begin{align*}
	\emptyset \overset{\kappa_0}{\underset{\kappa_{-1}}{\rightleftarrows}} X, \qquad 2X \overset{\kappa_2}{\underset{\kappa_{-3}}{\rightleftarrows}} 3X.
\end{align*}
The deterministic model satisfies
\begin{align*}
	\dot x = \kappa_0 - \kappa_{-1}x + \kappa_2 x^2 - \kappa_{-3}x^3.
\end{align*}
We have $n_{\rm{u}}=2$ and $n_{\rm{d}}=3$  such that condition (1) of Theorem \ref{thm:bd} is fulfilled. Hence, the non-equilibrium potential converges to the  function 
\begin{align}\label{ex1g}
g(\tilde x) &=- \int_{\tilde x_{\max}}^{\tilde x} \ln\left(\frac{\kappa_0 + \kappa_2x^2}{\kappa_{-1}x + \kappa_{-3}x^3}\right) dx.
\end{align}
The stationary distribution of the stochastically modeled system can be obtained in closed form \cite{Gardiner1985},
$$\pi^V\!(x)=\pi^V\!(0) \prod_{i=1}^x \frac{B[(i-1)(i-2)+P]}{i(i-1)(i-2)+Ri},$$
where 
$$B=\frac{\kappa_2}{\kappa_{-3}}, \quad R=\frac{\kappa_{-1}}{\kappa_{-3}},\quad \text{and}\quad P=\frac{\kappa_0}{\kappa_2}.$$
 If $P=R$, then the distribution is Poisson with parameter $B$ and, in fact, the system is complex balanced. In this case, $\tilde x_{\max}=\kappa_2/\kappa_{-3}$ and  the Lyapunov function \eqref{ex1g} reduces to
\begin{align*}
g(\tilde x) &=  \tilde x\ln(\tilde x)-\tilde x- \tilde x\ln\left(\frac{\kappa_{2}}{\kappa_{-3}}\right) +\frac{\kappa_{2}}{\kappa_{-3}},
\end{align*}
  in agreement with Theorem \ref{comp-main}.

For a concrete example that is not complex balanced, consider the model with rate constants  $\kappa_0 = 6, \kappa_{-1} = 11, \kappa_2 = 6, \kappa_{-3} = 1$. In this case 
\begin{align*}
	\dot x= 6 - 11 x + 6x^2 - x^3 = -(x-1)(x-2)(x-3),
\end{align*}
and there are two asymptotically stable equilibria at $c=1,3$ and one unstable at $c=2$.  Hence, the function $g(\tilde x)$ is a Lyapunov function locally around $\tilde x=1,3$, and takes the form
 \begin{align}\begin{split}
 g(\tilde x) &= \tilde x \left( \ln \left( \frac{\tilde x(\tilde x^2+11)}{\tilde x^2+1}\right) - \ln(6) - 1\right) + 2\sqrt{11} \arctan\left( \frac{\tilde x}{\sqrt{11}}\right) - 2\arctan \left( \tilde x \right) \\
 &\hspace{.2in} -2\sqrt{11}\arctan\left(\frac1{\sqrt{11}}\right) +1+\frac12\pi,
 \label{eq:89679786}
 \end{split}
 \end{align}
where, for this example, $\tilde x_{\text{max}} = 1$.
In Figure \ref{fig:convergence}, we demonstrate the convergence of the scaled non-equilibrium potential, $-\frac1V \ln(\tilde \pi^V(\tilde x^V))$, of the scaled process to $g(\tilde x)$ of \eqref{eq:89679786}.
 
  \begin{figure} 
 \begin{center}
 \includegraphics[width=6in]{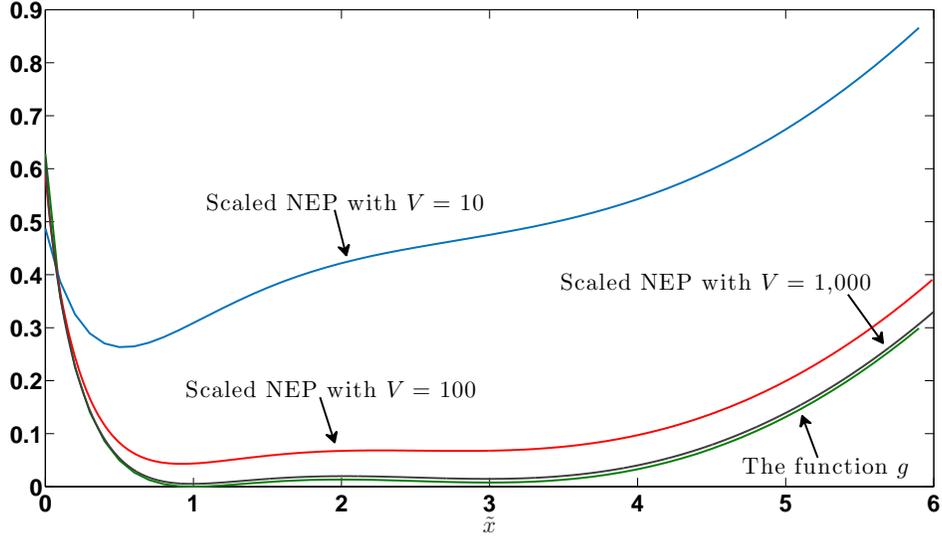}
 \end{center}
 \caption{Plots of the scaled non-equilibrium potential (NEP), $-\frac1V \ln(\tilde \pi^V(\tilde x^V))$, of the scaled birth-death process of Example \ref{example:10} are given for $V \in \{10,10^2,10^3\}$, as is the function $g(\tilde x)$ of \eqref{eq:89679786}.}
 \label{fig:convergence}
 \end{figure}
\hfill $\square$
\end{example}

\begin{example}
 Consider the  reaction network
\begin{align*}
	X \overset{k_{-1}}{\rightarrow} \emptyset, \qquad X \overset{k_1}{\rightarrow} 2X,
\end{align*}
which is equivalent to a linear birth-death process with absorbing state 0. This model has $n_{\rm{u}}=n_{\rm{u}}=1$,  and so for a stationary distribution to exist the second condition of Theorem  \ref{thm:bd} must hold. If we put  the death rate $\lambda_{-1}(1)$ to 0 and assume $\kappa_{-1}>\kappa_1$, then  condition (2) is fulfilled and
\begin{align}\label{exam}
g(\tilde x) &=- \int_0^{\tilde x} \ln\left(\frac{\kappa_1 x}{\kappa_{-1}x}\right) dx= -\tilde x \ln\left(\frac{\kappa_1 }{\kappa_{-1}}\right)
\end{align}
is a Lyapunov function.
 In fact, the stationary distribution of the modified system is proportional to
$$ \pi^V\!(x)\propto\left(\frac{\kappa_{1}}{\kappa_{-1}}\right)^{\!x-1}\frac{1}{x},$$
which is independent of $V$. It follows that for $\tilde x^V\to\tilde x$,
\begin{align*}
-\frac{1}{V}\ln(\tilde \pi^V\!(\tilde x^V)) &\approx -\left(\tilde x^V-\frac{1}{V}\right)\ln\left(\frac{\kappa_{1}}{\kappa_{-1}}\right)+\frac{1}{V}\ln(\tilde x^V)+\frac{1}{V}\ln(V) \\
	& \to - \tilde x \ln\left(\frac{\kappa_1 }{\kappa_{-1}}\right),
\end{align*}
in agreement with \eqref{exam}. In this particular case the deterministic system converges to zero -- the absorbing state of the stochastic system -- though this correspondence will not hold in general for systems with an absorbing state.
\hfill $\square$
\end{example}

\subsection{Other examples}

\begin{example}\label{endo1}
Consider the  reaction network,
\begin{equation*}
\emptyset\stackrel{\kappa_1}{\to} X,\qquad 2X\stackrel{\kappa_2}{\to} \emptyset.
\end{equation*}
The network is not complex balanced, nor is it a birth-death process, hence the theory developed in the previous sections is not applicable. The stationary distribution with scaled rate constants as in \eqref{eq:kappas} can be given in explicit form \cite{Engblom},
\begin{equation*}
\pi(x)=\frac{1}{\sqrt{2}I_1(2\sqrt{2}aV)}\frac{(aV)^x}{x!}I_{x-1}(2aV),\quad x\in\Z_{\ge 0},\quad \text{and}\quad a=\sqrt{\frac{\kappa_1}{\kappa_2}},
\end{equation*}
where $I_n(z)$ is the modified Bessel function of the $n$th kind. To evaluate the non-equilibrium potential we need two asymptotic results for the modified Bessel functions \cite{gradshteyn}:
\begin{align*}
I_1(z) &\propto \frac{1}{\sqrt{2\pi z}}e^{z}, \quad \text{for large }  z, \\
I_n(nz) &\propto \frac{1}{\sqrt{2\pi n}}\frac{e^{\eta n}}{(1+z^2)^{1/4}}\left( \,1+\sum_{k=1}^\infty \frac{u_k(t)}{n^k}\,\right), \quad \text{for large }  n
\end{align*}
where 
$$\eta=\sqrt{1+z^2}+\ln\left(\frac{z}{1+\sqrt{1+z^2}}\right),\qquad t =\frac{1}{\sqrt{1+z^2}},$$
and  $u_k(t)$, $k\ge 1$,  are functions of $t$. Note that the sum involving $u_k(t)$ decreases proportionally to $n^{-1}u_1(t)$ as $n$ gets large (the other terms vanish faster than $\frac{1}{n}$).

After some cumbersome calculations using the asymptotic relationships for the modified Bessel function, we obtain that  the non-equilibrium potential satisfies
\begin{equation*}
 -\frac{1}{V}\ln(\tilde\pi^V\!(\tilde x^V)) \to g(\tilde x), \quad\text{for }\quad  \tilde x^V\to \tilde x\quad\text{ as}\quad V\to\infty,
 \end{equation*}
 where $g(\tilde x)$ is defined by
 \begin{equation*}
 g(\tilde x) = 2\sqrt{2}a -2\tilde x\ln(a)+\tilde x\ln(\tilde x)-\tilde x(1+\ln(2))-\sqrt{\tilde x^2+4a^2}+\tilde x\ln(\tilde x+\sqrt{\tilde x^2+4a^2}).
 \end{equation*}
Another straightforward, but likewise cumbersome, calculation, shows that $g(\tilde x)$ in fact fulfils condition (2) in Definition \ref{def:lya}. By differentiation twice with respect to $x$, we find that $g''(\tilde x)>0$, hence $g(\tilde x)$ is  a Lyapunov function.
\hfill $\square$
\end{example}

\begin{example}\label{endo2}
 As a last example consider the  reaction network:

\begin{equation*}
X\stackrel{\kappa_1}{\to} \emptyset,\qquad \emptyset\stackrel{\kappa_2}{\to} 2X.
\end{equation*}
It is not weakly reversible, hence not complex balanced for any choice of rate constants. It is not a birth-death process either, as two molecules are created at each ``birth" event. It is similar to Example \ref{endo1}, but with the reactions going in the opposite direction.

Let the rate constants $\{\kappa_k\}$ be given and let the scaled rates $\{\kappa^V_k\}$ be given accordingly.  The deterministically modeled system takes the form
\begin{align}\label{det-eqn}
\dot{x}=2\kappa_2-\kappa_1 x
\end{align}
such that there is a unique equilibrium at $c=\frac{2\kappa_2}{\kappa_1}$. Let $a\eqdef \frac{\kappa_2}{2\kappa_1}$ so that  $c=4a$.
The stationary distribution  exists for all reaction rates and is most easily characterized in the following way (see Supporting Information):
$$N=N_{1}+2N_{2},\qquad N_{1}\sim \po(2a V), \quad \text{and}\quad N_{2}\sim \po\left(a V\right),$$
where $N_1$ and $N_2$ are two  independent Poisson random variables with intensities $2a V$ and $a V$, respectively. Hence, the stationary distribution can be written as
\begin{align*}
\pi(x) 	&= e^{-3Va} \sum_{k,m\colon x=k+2m}   \frac{(2Va)^k}{k!}\frac{(Va)^{m}}{ m!}.    
\end{align*}

  In the Supporting Information it is shown that the  limit of the non-equilibrium potential exists as $V\to \infty$ with  $\tilde x^V\to\tilde x$:
$$\lim_{V\to\infty}-\frac{1}{V}\ln(\tilde\pi^V\!(\tilde x^V) )= g(\tilde x),$$
where
\begin{align*}
g(\tilde x) & = \int_0^{\tilde x} \ln\left(\sqrt{1+\frac{2 x}{a}}-1\right) dx -\ln(2)\,\tilde x
\end{align*}
(the integral can be solved explicitly, see Supporting Information).
The first derivative of $g$ fulfils
$$g'( x)>0\quad \text{if and only if} \quad 4a< x,$$
and zero if and only if $4a= x$. Comparing with \eqref{det-eqn} yields 
$$g'(x) \dot{x}\le 0 \quad \text{for all}\quad x>0,$$
and equality only if $4a=x$. The second derivative of $g$  is positive for all $x$. Hence, $g(x)$ is a Lyapunov function.
\end{example}

\section{Discussion}

We have demonstrated a relationship between the \textit{stochastic} models for reaction systems and an important Lyapunov function for the corresponding \textit{deterministic} models.  In particular, we showed that this relationship holds for the class of complex balanced systems, which contains the class of detailed balanced systems that have been well studied in both the physics and probability literature \cite{Whittle86}.  Further, we showed the  correspondence  holds for a wider class of models including those birth and death systems that can be modeled via reaction systems.  It remains open just how wide the class of models satisfying this relationship is.

\vspace{.2in}
\noindent \textbf{Acknowledgements.}
We thank the American Institute of Mathematics for hosting a workshop at which this research was initiated.    Anderson was supported by NSF grants DMS-1009275 and DMS-1318832 and Army Research Office grant W911NF-14-1-0401. Craciun was supported by NSF grant DMS1412643 and NIH grant R01GM086881.    Wiuf was supported by the Lundbeck Foundation (Denmark), the Carlsberg Foundation (Denmark), Collstrups Fond (Denmark), and the Danish Research Council.  Part of this work was carried out while Wiuf visited the Isaac Newton Institute in 2014. 

\bibliographystyle{amsplain} 
\bibliography{LyapunovScaling}

\def\cprime{$'$}
\providecommand{\bysame}{\leavevmode\hbox to3em{\hrulefill}\thinspace}
\providecommand{\MR}{\relax\ifhmode\unskip\space\fi MR }
\providecommand{\MRhref}[2]{%
  \href{http://www.ams.org/mathscinet-getitem?mr=#1}{#2}
}
\providecommand{\href}[2]{#2}
\begin{thebibliography}{10}

\bibitem{Allen2003}
Linda~J.S. Allen, \emph{An introduction to stochastic processes with
  applications to biology}, Pearson Education New Jersey, 2003.

\bibitem{AndGAC_ONE2011}
David~F. Anderson, \emph{A proof of the {G}lobal {A}ttractor {C}onjecture in
  the single linkage class case}, SIAM J. Appl. Math \textbf{71} (2011), no.~4,
  1487 -- 1508.

\bibitem{AndGlobal}
\bysame, \emph{Global asymptotic stability for a class of nonlinear chemical
  equations}, SIAM J. Appl. Math \textbf{68} (May 2008), no.~5, 1464--1476.

\bibitem{ACK2010}
David~F. Anderson, Gheorghe Craciun, and Thomas~G. Kurtz, \emph{Product-form
  stationary distributions for deficiency zero chemical reaction networks},
  Bull. Math. Biol. \textbf{72} (2010), no.~8, 1947--1970.

\bibitem{AndACRStoch2014}
David~F. Anderson, Germ\'an~A. Enciso, and Matthew~D. Johnston,
  \emph{Stochastic analysis of biochemical reaction networks with absolute
  concentration robustness}, J. R. Soc. Interface \textbf{11} (2014), no.~93,
  20130943.

\bibitem{AndKurtz2011}
David~F. Anderson and Thomas~G. Kurtz, \emph{Continuous time {M}arkov chain
  models for chemical reaction networks}, Design and Analysis of Biomolecular
  Circuits: Engineering Approaches to Systems and Synthetic Biology (H.~Koeppl
  et~al., ed.), Springer, 2011, pp.~3--42.

\bibitem{AK2015}
\bysame, \emph{Stochastic analysis of biochemical systems}, Springer, 2015.

\bibitem{AndShiu}
David~F. Anderson and Anne Shiu, \emph{The dynamics of weakly reversible
  population processes near facets}, SIAM J. Appl. Math. \textbf{70} (2010),
  no.~6, 1840--1858.

\bibitem{CLWBNE2012}
Carlo Chan, Xinfeng Liu, Liming Wang, Lee Bardwell, Qing Nie, and German
  Enciso, \emph{Protein scaffolds can enhance the bistability of multisite
  phosphorylation systems}, PLoS computational biology \textbf{8} (2012),
  no.~6, e1002551.

\bibitem{CraciunShiu09}
Gheorghe Craciun, Alicia Dickenstein, Anne Shiu, and Bernd Sturmfels,
  \emph{Toric dynamical systems}, Journal of Symbolic Computation \textbf{44}
  (2009), 1551--1565.

\bibitem{CNP2013}
Gheorghe Craciun, Fedor Nazarov, and Casian Pantea, \emph{Persistence and
  permanence of mass-action and power-law dynamical systems}, SIAM J. Appl.
  Math. \textbf{73} (2013), no.~1, 305--329.

\bibitem{DRN2013}
Tanya~M. Duncan, Michael~C. Reed, and H.~Frederik Nijhout, \emph{A population
  model of folate-mediated one-carbon metabolism}, Nutrients \textbf{5} (2013),
  no.~7, 2457--2474.

\bibitem{Elowitz2002}
Michael~B. Elowitz, Arnold~J. Levin, Eric~D. Siggia, and Peter~S. Swain,
  \emph{Stochastic gene expression in a single cell}, Science \textbf{297}
  (2002), no.~5584, 1183--1186.

\bibitem{Engblom}
Stefan Engblom, \emph{Spectral approximation of solutions to the chemical
  master equation}, J. Comp. Appl. Math. \textbf{229} (2009), 208--221.

\bibitem{Kurtz86}
Stewart~N. Ethier and Thomas~G. Kurtz, \emph{Markov processes: Characterization
  and convergence}, John Wiley \& Sons, New York, 1986.

\bibitem{FeinbergLec79}
Martin Feinberg, \emph{Lectures on chemical reaction networks}, Delivered at
  the Mathematics Research Center, Univ. Wisc.-Madison. Available for download
  at \url{http://crnt.engineering.osu.edu/LecturesOnReactionNetworks}, 1979.

\bibitem{Feinberg95a}
\bysame, \emph{Existence and uniqueness of steady states for a class of
  chemical reaction networks}, Arch. Rational Mech. Anal. \textbf{132} (1995),
  311--370.

\bibitem{Othmer2005}
Chetan Gadgil, Chang~Hyeong Lee, and Hans~G. Othmer, \emph{A stochastic
  analysis of first-order reaction networks}, Bull. Math. Bio. \textbf{67}
  (2005), 901--946.

\bibitem{Gardiner1985}
Crispin~W. Gardiner, \emph{Handbook of stochastic methods, 2nd edition},
  Spinger, 1985.

\bibitem{Gille2009}
Gilles Gnacadja, \emph{Univalent positive polynomial maps and the equilibrium
  state of chemical networks of reversible binding reactions}, Advances in
  Applied Mathematics \textbf{43} (2009), no.~4, 394--414.

\bibitem{GMS2014}
Manoj Gopalkrishnan, Ezra Miller, and Anne Shiu, \emph{A geometric approach to
  the global attractor conjecture}, SIAM J. Appl. Dyn. Syst. \textbf{13}
  (2014), no.~2, 758--797.

\bibitem{gradshteyn}
I.S. Gradshteyn and I.M. Ryzhik, \emph{Tables of integrals, series, and
  products, 7nd edition}, Academic Press, 2007.

\bibitem{Gun2003}
Jeremy Gunawardena, \emph{Chemical reaction network theory for in-silico
  biologists}, Notes available for download at
  http://vcp.med.harvard.edu/papers/crnt.pdf, 2003.

\bibitem{Gunawardena2005}
Jeremy Gunawardena, \emph{Multisite protein phosphorylation makes a good
  threshold but can be a poor switch}, PNAS \textbf{102} (2005), no.~41,
  14617--14622.

\bibitem{GK2013}
Ankit Gupta and Mustafa Khammash, \emph{Determining the long-term behavior of
  cell populations: A new procedure for detecting ergodicity in large
  stochastic reaction networks}, arXiv:1312.2879, 2013.

\bibitem{HQ2006}
William~J. Heuett and Hong Qian, \emph{Grand canonical {M}arkov model: A
  stochastic theory for open nonequilibrium biochemical networks}, J. Chem.
  Phys. \textbf{124} (2006), 044110.

\bibitem{Horn72}
Friedrich J.~M. Horn, \emph{Necessary and sufficient conditions for complex
  balancing in chemical kinetics}, Arch. Rat. Mech. Anal. \textbf{49} (1972),
  no.~3, 172--186.

\bibitem{HornJack72}
Friedrich J.~M. Horn and Roy Jackson, \emph{General mass action kinetics},
  Arch. Rat. Mech. Anal. \textbf{47} (1972), 81--116.

\bibitem{kang2012new}
Hye-Won Kang, Likun Zheng, and Hans~G Othmer, \emph{A new method for choosing
  the computational cell in stochastic reaction--diffusion systems}, Journal of
  mathematical biology \textbf{65} (2012), no.~6-7, 1017--1099.

\bibitem{karlin}
Samuel Karlin and Howard~M. Taylor, \emph{A first course in stochastic
  processes, 2nd edition}, Academic Press, 1975.

\bibitem{Kurtz72}
Thomas~G. Kurtz, \emph{The relationship between stochastic and deterministic
  models for chemical reactions}, J. Chem. Phys. \textbf{57} (1972), no.~7,
  2976--2978.

\bibitem{Kurtz78}
\bysame, \emph{Strong approximation theorems for density dependent {M}arkov
  chains}, Stoch. Proc. Appl. \textbf{6} (1977/78), 223--240.

\bibitem{Kurtz80}
\bysame, \emph{Representations of markov processes as multiparameter time
  changes}, Ann. Prob. \textbf{8} (1980), no.~4, 682--715.

\bibitem{KurtzPop81}
\bysame, \emph{Approximation of population processes}, CBMS-NSF Reg. Conf.
  Series in Appl. Math.: 36, SIAM, 1981.

\bibitem{May2001}
Robert~McCredie May, \emph{Stability and complexity in model ecosystems},
  vol.~6, Princeton University Press, 2001.

\bibitem{Pantea2012}
Casian Pantea, \emph{On the persistence and global stability of mass-action
  systems}, SIAM J. Math. Analy. \textbf{44} (2012), no.~3, 1636--1673.

\bibitem{PCK2014}
Lo\"ic Paulev\'e, Gheorghe Craciun, and Heinz Koeppl, \emph{Dynamical
  properties of discrete reaction networks}, Journal of Mathematical Biology
  \textbf{69} (2014), no.~1, 55--72.

\bibitem{perko}
Lawrence Perko, \emph{Differential equations and dynamical systems, 3rd
  edition}, Springer, 2000.

\bibitem{Qian11}
Hong Qian, \emph{Nonlinear stochastic dynamics of mesoscopic homogeneous
  biochemical reaction systems---an analytical theory}, Nonlinearity
  \textbf{24} (2011), R19 --R49.

\bibitem{ShF2010}
Guy Shinar and Martin Feinberg, \emph{Structural sources of robustness in
  biochemical reaction networks}, Science \textbf{327} (2010), no.~5971,
  1389--1391.

\bibitem{SmithTh2011}
Hal~L. Smith and Horst~R. Thieme, \emph{Dynamical systems and population
  persistence}, vol. 118, American Mathematical Soc., 2011.

\bibitem{Sontag2001}
Eduardo~D. Sontag, \emph{Structure and stability of certain chemical networks
  and applications to the kinetic proofreading of t-cell receptor signal
  transduction}, IEEE Trans. Auto. Cont. \textbf{46} (2001), no.~7, 1028--1047.

\bibitem{TCN2003}
John~J. Tyson, Katherine~C. Chen, and Bela Novak, \emph{Sniffers, buzzers,
  toggles and blinkers: dynamics of regulatory and signaling pathways in the
  cell}, Current opinion in cell biology \textbf{15} (2003), no.~2, 221--231.

\bibitem{Whittle86}
Peter Whittle, \emph{Systems in stochastic equilibrium}, John Wiley \& Sons,
  Inc., New York, NY, USA, 1986.

\bibitem{Wilkinson2006}
Darren~J. Wilkinson, \emph{Stochastic modelling for systems biology}, Chapman
  and Hall/CRC Press, 2006.

\end{thebibliography}

\end{document}